\newtheorem{theorem}{Theorem}
\newtheorem{lemma}{Lemma}
\newtheorem{proposition}{Proposition}
\theoremstyle{definition}
\newtheorem{definition}{Definition}
\newtheorem{remark}{Remark}
\newcommand{\FF}{\mathcal{F}}
\newcommand{\expo}{{\rm expo}}
\newcommand{\TT}{\mathcal{T}}
\newcommand{\FI}{\mathcal{FI}}
\newcommand{\II}{\mathcal{I}}
\newcommand{\Tti}{\tilde{T}}
\newcommand{\p}{\mathbb{P}}
\renewcommand{\phi}{\varphi}
\newcommand{\ignore}[1]{}
\title{Two Layer 3D Floor Planning}
\author{Paul Horn\thanks{Department of Mathematics, Harvard University, {\tt $\{$phorn, lippner$\}$@math.harvard.edu}\newline Research supported by grant FA9550-09-1-0090-DOD-35-CAP} \and Gabor Lippner$^*$}
\begin{document}
\maketitle
\begin{abstract}
A 3D floor plan is a non-overlapping arrangement of blocks within a large box.  Floor planning is a central notion in chip-design, and with recent advances in 3D integrated circuits, understanding 3D floor plans has become important.  In this paper, we study so called mosaic 3D floor plans where the interior blocks partition the host box under a topological equivalence.  We give representations which give an upper bound on the number of general 3D floor plans, and further consider the number of two layer mosaic floorplans.  We prove that the number of two layer mosaic floor plans is $n^{(1+o(1))n/3}$.  This contrasts with previous work which has studied `corner free' mosaic floor plans, where the number is just exponential.  The upper bound is by giving a representation, while the lower bound is a randomized construction.
\end{abstract}

\section{Introduction}

A (2D) floor plan is non-overlapping arrangement of rectangles within a larger host rectangle. It is a central notion in chip-design, where one wants to find an (in some sense) optimal arrangement of the components of a chip. The state-of-the-art methods for finding these optimal arrangements often involve enumerating through all possible arrangements, or generating arrangements randomly to find the optimal one. Hence finding efficient encodings for arrangements with a given number, $n$, of rectangles - and understanding the limitations of these - has become an important issue in the field of chip-design. Recently technology has enabled us to build chips in three dimensions~\cite{3dC}. This calls for an understanding of the 3D version of floor plans, where blocks are arranged within a host box.

Mosaics are a special type of floor plans, where the smaller rectangles partition the host rectangle. Generic mosaic floor plans are such that they do not have adjacencies that could be removed by a small perturbation (the precise definition will be given in Section~\ref{floorplans_section}).  Enumeration and efficient encoding of these is the first step towards understanding general floor plans. Obviously, there are uncountably many different (mosaic) floor plans. To meaningfully approach these questions one has to start with a notion of equivalence of floor plans, and enumerate and encode the resulting equivalence classes. We will consider the most widely studied notion, the so called topological equivalence (see Definition~\ref{equivalence_defs}). The main problems are to find efficient encodings and asymptotic enumeration of topological equivalence classes of mosaic floor plans.

In the 2D case these problems have been studied extensively and satisfying answers have been found. The number of equivalence classes is essentially exponential in $n$. In fact they can be completely enumerated and their number turns out to be the number of Baxter permutations. There are nice bijections between floor plans and such permutations (see e.g.~\cite{baxter}),  and also between floor plans and pairs of dual binary trees (see~\cite{dual trees}) that provide efficient encodings. The crucial observation behind these results is always a certain kind of induction on the number of rectangles: if one removes the top left rectangle from an arrangement, the resulting gap can be filled by extending some of the other rectangles in the arrangement that were previously adjacent to the rectangle just removed.

This idea does not carry over to 3D floor plans. There is a certain kind of local arrangement (see Figure~\ref{fig:corner}) where removing the small block in the front, neither of the adjacent blocks can be extended to cover the gap without creating overlaps. We shall refer to such an arrangement as a \textit{diagonal corner}. This seems to be the only obstruction to generalizing the inductive argument: if one forbids diagonal corners, then a very similar induction works,  see e.g. Chapter 3.3 of~\cite{thesis?}. Hence many results carry over to 3D, in particular the number of such arrangements is again (up to lower order multiplicative terms) exponential. However the question is completely unresolved for general 3D arrangements. It was previously unknown even whether there exist more than exponentially many 3D floor plans.  Indeed we are unaware of any non-trivial lower or upper bounds on the number of 3D mosaic floor plans.

\begin{figure}[ht]
\begin{center}
\includegraphics[height=2in]{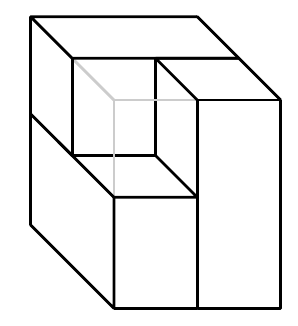}
\end{center}
\caption{Diagonal Corner\label{fig:corner}}
\end{figure}

A number of other representations have been proposed for (often special classes of) 3D floor plans.  A recent survey is given by \cite{flk}.  This lists three attempts to represent mosaic floorplans.  The 3D-corner block lists of Ma, Hong, Dong and Cheng~\cite{3dcorn} and O-Sequence of Ohta, Yamada, Kodama, and Fujiyosa~\cite{OSeq} give representations for labeled floorplans without diagonal corners.   Finally, there has been an attempt by Wang, Young, and Cheng~\cite{Cheng1} to generalize pairs of trees to encode 3D mosaic floor plans. Their approach would give an $n^n$ type upper bound, but their paper has errors and their encoding appears to be incomplete.

The starting point of our paper is nevertheless~\cite{Cheng1}.
Let $\FF_n$ denote the set of unlabeled generic 3D mosaic floor plans, and $\TT_n$ the subset where there are only 2 layers in the $z$ direction. Our main contributions are the following.
\begin{itemize}
\item We give a non-trivial upper bound on the number of generic 3D floor plans.
\begin{theorem}\label{upperbound_theorem}
\[
\log |\FF_n| \leq 3n \log n + O(n).
\]
\end{theorem}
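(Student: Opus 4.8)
The plan is to prove the bound by constructing an injective encoding of each floor plan into a combinatorial object drawn from a set of size at most $(n!)^3$; since $\log (n!)^3 = 3n\log n + O(n)$, this suffices. The geometric fact that drives the encoding is that any two interior-disjoint axis-aligned boxes must have disjoint interiors in at least one of the three coordinate projections: if their $x$-, $y$- and $z$-projections all overlapped, the boxes would overlap. Thus for every unordered pair of blocks there is at least one axis along which one block lies strictly on the negative side of the other, giving a definite ordering of that pair along that axis. Recording, for each axis $d \in \{x,y,z\}$, the resulting order on the blocks separated along $d$ produces three (partial) orders, and choosing a linear extension of each gives a triple of permutations of the $n$ blocks. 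There are at most $(n!)^3$ such triples, so the bound follows provided the induced map is injective on labeled floor plans; since unlabeled floor plans form a quotient of labeled ones, $|\FF_n|$ inherits the same estimate.

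To see why three orders are the right amount of information, it helps to first set up coordinates. I would show that in a generic mosaic floor plan the faces in each direction occupy only $O(n)$ distinct coordinate planes — each block contributes two faces per axis — so that after normalization every block is a box whose two opposite corners have integer coordinates in $[O(n)]^3$. Simply listing all six coordinates of every block is a sound encoding, but it costs $6n\log n + O(n)$, twice the target; the content of the theorem is precisely to halve this. A useful structural lemma toward the saving is that every internal grid plane carrying a right face of some block also carries a left face of some block (take an interior point of the right face; the block immediately beyond it must begin exactly on that plane), so the grid in each direction is generated by the minimal corners alone. This tempts one to record only the minimal corners, but they do not determine the floor plan: around a single corner the three incident slabs can be resolved in several inequivalent ways with identical minimal-corner sets, which is exactly the diagonal-corner obstruction from the introduction. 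The separation orders carry precisely the missing ``which way does each corner resolve'' data while still fitting into three orders.

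The heart of the proof — and the step I expect to be the main obstacle — is showing that the triple of separation orders is a complete invariant: two generic mosaic floor plans inducing the same three orders must be topologically equivalent. Equivalently, one must reconstruct every adjacency, hence the whole partition up to topological equivalence, from the pairwise separation data. In two dimensions this is the classical completeness of the sequence-pair representation, where the induction of peeling off the top-left rectangle always goes through; in three dimensions that induction stalls exactly at diagonal corners, so a direct argument is required. I would sweep along one axis, maintaining the induced two-dimensional cross-sectional floor plan and updating it as faces are crossed, and argue that the separation orders determine both when a cross-sectional change occurs and what it is. The delicate point is controlling the diagonal-corner configurations, where the cross-section changes in a way not forced by any single pairwise relation; here one exploits that the orders are recorded along all three axes at once, so a resolution invisible to one axis is detected by another. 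Crucially, for an upper bound only injectivity is needed, not a characterization of which triples are realizable, so the encoding may be lossy and over-count — which is why the method yields the loose exponent $3n\log n$ rather than the conjectured $n\log n$.
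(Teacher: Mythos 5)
The fatal gap is not where you placed it. You locate the difficulty in proving that the three separation \emph{partial orders} form a complete invariant, but your counting step encodes a floor plan by three \emph{linear extensions}, and that compression discards exactly the information your reconstruction would need: which pairs are actually separated along which axis. These are different objects, and the map from order-triples to extension-triples collides. Already in 2D: take a generic $2\times 2$ arrangement with blocks $A$ (top left), $B$ (top right), $C$ (bottom left), $D$ (bottom right). If the vertical wall is unbroken, the separation orders are $\{A<B,\ A<D,\ C<B,\ C<D\}$ in $x$ and $\{C<A,\ D<B,\ C<B\}$ in $y$; if instead the horizontal wall is unbroken, they are $\{A<B,\ A<D,\ C<D\}$ in $x$ and $\{C<A,\ D<A,\ C<B,\ D<B\}$ in $y$. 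These two floor plans are topologically inequivalent (their dissection graphs differ), yet the single pair of permutations $(ACBD,\ CDAB)$ is a valid pair of linear extensions for \emph{both}; extruding in $z$ gives a 3D example with any $z$-permutation appended. So injectivity can only be rescued by a carefully specified canonical choice of extensions plus a proof that canonical triples never collide, and you supply neither. The sweep reconstruction cannot be run on your data either: sweeping along an axis requires knowing the order in which faces are crossed, but for pairs not separated along that axis the encoding records nothing, and those are precisely the pairs involved in crossings and diagonal corners --- ``detected by another axis'' is the statement to be proved, not an argument. A further wrinkle: the separation orders are not even invariants of topological equivalence (two walls not linked by any block can slide past each other under deformation, changing which distant pairs are $y$-separated), so the map is only defined through representatives and the reconstruction must return the class regardless. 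Finally, the natural repair --- recording the full partial orders instead of extensions --- forfeits the bound: each separation order is an interval order, naturally encoded by $2n$ endpoint positions, costing $6n\log n+O(n)$ in total, i.e.\ exactly the coordinate-listing bound you set out to halve.

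The paper takes an entirely different route that is designed to make the reconstruction step actually provable. It encodes a floor plan by the eight vertex- and edge-labeled corner trees of \cite{Cheng1} (one per corner of the host box), proves that the dissection graphs can be recovered from these by tracing the cycle of blocks around each supporting segment, and then shows that $T_1,T_3,T_6,T_8$ determine $T_2,T_4,T_5,T_7$, giving at most $(n!)^4c^n$ labeled floor plans and hence $|\FF_n|\leq (n!)^3c^n$. It is worth internalizing the paper's own caution here: the superficially similar completeness claim of Wang, Young, and Cheng --- that two opposite corner trees determine the floor plan --- is exactly the kind of ``compressed representation is injective'' assertion at issue, and the authors state its proof is incomplete and they cannot fill the gap. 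Your claim is of the same character, with weaker data, and carries at least the same burden.
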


\item We show that already the number of generic two-layer 3D mosaic floor plans up to topological equivalence is super-exponential. In particular using a random construction we prove
\begin{theorem}\label{randomconstr_theorem}
\[
\log |\TT_n| \geq \left(\frac{1}{3}-o(1)\right)n \log n.
\]
\end{theorem}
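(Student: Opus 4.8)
The plan is to show that there are at least $n^{(1/3 - o(1))n}$ topologically distinct two-layer mosaic floor plans by producing a large family of them via a randomized construction and then arguing that few of them coincide up to topological equivalence.

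**Setting up the construction.** First I would fix the coarse geometry. Since we have two layers in the $z$-direction, I would split the host box into a bottom layer and a top layer, each of which I treat as a 2D mosaic floor plan on roughly $n/2$ blocks in the $xy$-plane. The key freedom in a genuinely three-dimensional arrangement, as the diagonal-corner discussion suggests, is the way the block boundaries in the bottom layer interact with those in the top layer across the shared interface $z = 1/2$. The natural idea is to take a fixed, highly regular skeleton in each layer --- say a grid-like partition into roughly $n/3$ columns --- and to encode extra information in how columns are subdivided and aligned between the two layers. I would aim for a scheme where each of $\approx n/3$ independent ``cells'' can be assigned one of $\approx n$ distinct local configurations, so that the total count is $n^{(1-o(1))n/3}$; the exponent $1/3$ strongly suggests that the degrees of freedom come in blocks of three (one coordinate's worth of choice per block), which I would build in deliberately.

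**Making the choices independent and generic.** The heart of the argument is to choose these local configurations randomly and independently, and then to verify two things. First, that a random instance is \emph{generic} with high probability --- i.e. that the arrangement has no removable adjacencies and in particular that the configurations I introduce really do create the obstructions (diagonal corners) that prevent collapse. Second, and more importantly, that the map from my choice-vectors to topological equivalence classes is close to injective. I would quantify this by bounding, for a fixed floor plan $F$ in the family, the number of other choice-vectors that produce a floor plan topologically equivalent to $F$. If I can show this multiplicity is at most $n^{o(n)}$ (for instance, by showing that the topological data --- the adjacency poset of the blocks, or the combinatorial type of the interface --- determines the choice-vector up to a small symmetry group and a subexponential number of relabelings), then dividing the total number of choice-vectors $n^{(1-o(1))n/3}$ by the multiplicity still leaves $n^{(1/3 - o(1))n}$ distinct classes. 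The randomization is used to guarantee that a typical instance lands in the ``low-multiplicity'' regime, via a first-moment / union-bound argument over the (subexponentially many) potential coincidences.

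**The main obstacle.** The hard part will be the injectivity-up-to-equivalence step, i.e. showing that topologically equivalent instances must come from essentially the same choice-vector. Topological equivalence is a coarse relation --- it only remembers the combinatorial adjacency structure, not metric data --- so I must design the local configurations to leave a \emph{combinatorial} fingerprint, not merely a geometric one. Concretely, I would want each local choice to change the incidence pattern of blocks across the interface (which block sits above which, and which pairs of blocks share a two-dimensional face versus only an edge), so that reading off the adjacency relations of the two-layer arrangement recovers the choices up to controlled ambiguity. I expect to spend most of the proof verifying that distinct choices genuinely yield distinct adjacency data and that the number of ways two equivalent plans can differ (from the symmetries of the skeleton and from boundary effects) is only $n^{o(n)}$, so that it is absorbed into the $o(1)$ in the exponent.
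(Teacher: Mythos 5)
Your proposal has the right skeleton --- a randomized construction with roughly $n/3$ degrees of freedom, each worth $\log n$ bits, plus a genericity check and an injectivity check --- but it contains a genuine gap: the construction itself is never exhibited, and everything you defer (``I would aim for a scheme where\dots'', ``I expect to spend most of the proof verifying\dots'') is exactly where the content of the proof lies. Genericity and injectivity-up-to-topological-equivalence are properties of a specific arrangement; they cannot be verified, even in outline, for a construction described only by the properties one hopes it has. Moreover, your fallback for injectivity --- bounding the multiplicity of each equivalence class by $n^{o(n)}$ via a first-moment / union-bound argument --- is not something one can even begin to estimate without concrete adjacency data in hand, and it adds a layer of difficulty the problem does not require.

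For comparison, the paper's construction makes both issues essentially trivial. Take an $N\times N$ grid; for a set $S$ of interior grid positions, place a height-two (``double'') block at each position of $S$, fill the rest of the top layer with horizontal strips and the bottom layer with vertical strips. The strips of the two layers form a combinatorial coordinate system, so the grid position of every double block is a topological invariant and $S$ is recoverable from the equivalence class of $F_S$: injectivity is exact and deterministic, with no multiplicity to divide out. Randomness enters only in the counting: one needs $S$ to be ``good'' (no chosen position on the boundary, no two chosen positions adjacent or diagonally incident) so that $F_S$ is generic and has exactly $2N+3|S|$ blocks, and a uniformly random $S$ of size $t$ is good with probability at least $\exp\left(-50t^2/N^2\right)$, which is negligible against $\binom{(N-2)^2}{t}$. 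Each double block costs three blocks (itself plus one extra strip in each layer), which is where the exponent $1/3$ actually comes from; setting $t = N\log N$ and $n = 2N+3t$ yields $\log|\TT_n| \geq \left(\tfrac{1}{3}-o(1)\right)n\log n$. Your intuition that the degrees of freedom ``come in blocks of three'' is thus correct, but without a concrete construction realizing it, the proposal remains a plan rather than a proof.
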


\item Finally,  exploiting duality of binary trees for 2D floor plans, careful analysis gives that this lower bound is asymptotically correct for 2-layer floor plans.
\begin{theorem}\label{2layer_theorem}
\[
\lim \frac{\log |\TT_n|}{n \log n} = \frac{1}{3}.
\]
\end{theorem}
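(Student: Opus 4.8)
The plan is to combine the lower bound already established in Theorem~\ref{randomconstr_theorem} with a matching upper bound, and then conclude by a squeezing argument. Since Theorem~\ref{randomconstr_theorem} gives $\log|\TT_n| \ge (\tfrac13 - o(1)) n \log n$, it suffices to prove the complementary estimate
\[
\log |\TT_n| \le \left(\tfrac13 + o(1)\right) n \log n,
\]
for then the ratio $\frac{\log|\TT_n|}{n\log n}$ is trapped in $[\tfrac13 - o(1), \tfrac13 + o(1)]$ and the limit is forced to be $\tfrac13$. Note that the generic upper bound of Theorem~\ref{upperbound_theorem} is far too weak here (it loses a factor of $9$ in the exponent), so the two-layer structure must be exploited directly.

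My approach to the upper bound is to separate the combinatorial data of a two-layer plan into a purely 2D part, which is only exponentially large, and a genuinely three-dimensional part, which carries all of the super-exponential growth. First I would record the two horizontal subdivisions obtained by slicing just above the floor and just below the ceiling; each is a 2D mosaic floor plan on at most $n$ rectangles, so by the dual-binary-tree (equivalently Baxter-permutation) encoding for 2D plans each can be specified using only $O(n)$ bits. The point is that reconstructing the 3D plan from these two slices requires only the additional information of how blocks are matched vertically across the interface, together with the relative heights of the interface faces. Everywhere that the 2D inductive \emph{remove-a-corner-and-extend} argument survives, this matching is forced and carries no information; the only places it can fail are the diagonal corners identified in the introduction.

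The heart of the argument is therefore to bound the contribution of the diagonal corners. I would show that the whole plan is determined by the two slices together with a single pointer, taking one of at most $n^{1+o(1)}$ values, for each diagonal corner, and then prove that a generic two-layer plan on $n$ blocks contains at most $(\tfrac13 + o(1)) n$ diagonal corners. The counting bound then follows, since
\[
\log|\TT_n| \le O(n) + \left(\tfrac13 + o(1)\right) n \cdot (1 + o(1))\log n = \left(\tfrac13 + o(1)\right) n\log n.
\]
The factor $\tfrac13$ is exactly what one expects to be tight: each diagonal corner should consume three blocks that cannot be simplified away, so the number of independent corners saturates near $n/3$, matching the random construction behind Theorem~\ref{randomconstr_theorem}.

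The main obstacle I anticipate is the structural claim that (roughly) three blocks must be spent per diagonal corner, i.e.\ the bound of $(\tfrac13+o(1))n$ on the number of diagonal corners, together with the statement that once the diagonal corners are resolved the remaining vertical matching is uniquely determined by the two 2D slices. Making this precise requires a careful amortized analysis of the inductive peeling process: one must argue that whenever the 2D corner-removal step is blocked a diagonal corner is present, that resolving it by specifying the single pointer unblocks the induction, and that distinct diagonal corners cannot share their spent blocks in a way that would push their count past $n/3$. Handling the genericity conditions and the relative-height data at the interface without smuggling in extra super-exponential freedom is the delicate part; the dual-tree structure of the two slices is precisely what keeps all of the non-corner bookkeeping inside the $O(n)$ budget.
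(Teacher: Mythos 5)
Your high-level skeleton is the same as the paper's: keep the lower bound of Theorem~\ref{randomconstr_theorem}, prove a matching upper bound $\log|\TT_n|\le(\tfrac13+o(1))\,n\log n$, and squeeze. Your instinct that the purely 2D data of the two layers costs only $O(n)$ bits while the cross-layer identification carries all the super-exponential growth is also correct, and is the paper's starting point. The proposal breaks down, however, at its load-bearing claim: that a generic two-layer plan has at most $(\tfrac13+o(1))n$ diagonal corners, each charged one pointer. That claim is not just unproven, it is false, and the paper's own lower-bound construction in Section~\ref{lowerbound_section} witnesses this. There, the number of double blocks is $t=(\tfrac13-o(1))n$, and each double block has four vertical edges; at each such edge a top-layer strip piece and a bottom-layer strip piece meet in exactly one point, which is precisely the diagonal ($d$-type) adjacency underlying a diagonal corner. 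So that plan has roughly $\tfrac43 n$ diagonal corners. Your heuristic ``each diagonal corner consumes three blocks'' is really a statement about double blocks, not about diagonal corners: each double block in the construction costs three blocks but \emph{owns four} diagonal corners, and distinct diagonal corners share their consumed blocks --- exactly the double-counting an amortized analysis would have to rule out and cannot. With one pointer of range $n^{1+o(1)}$ per diagonal corner, your scheme would only yield $\log|\TT_n|\le(\tfrac43+o(1))\,n\log n$, which does not close the gap with the lower bound.

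The paper reaches the constant $\tfrac13$ by a mechanism your proposal is missing. It encodes a plan by the eight \emph{edge-labeled} corner trees (only exponentially many possibilities) together with a \emph{strong identification set}: a small set of cross-tree vertex identifications from which all remaining identifications can be recovered by tracing loops through the trees (Proposition~\ref{prop:free}, Lemma~\ref{lem:complete}, Lemma~\ref{lem:top}). Crucially, it constructs \emph{three} different strong identification sets, of sizes $|D|$, at most $|T|$, and at most $|B|$, where $T$, $B$, $D$ are the top-only, bottom-only, and double blocks. Since $|T|+|B|+|D|=n$, pigeonhole guarantees one of them has size at most $n/3$ (Proposition~\ref{prop:smallstrong}); the $\tfrac13$ comes from this three-way partition of the blocks, not from bounding the number of occurrences of any local configuration. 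Note that no single charging scheme can work: $|D|$ alone can be as large as $n$ (a plan all of whose blocks span both layers), and as shown above the number of diagonal corners can exceed $n$. Having several alternative encodings and recording only the cheapest one is essential. To repair your argument you would need an analogue of this step: show that the vertical matching can be pinned down by pointers charged entirely to $T$, or entirely to $B$, or entirely to $D$, and then choose whichever of the three classes is smallest.
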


\end{itemize}

\begin{remark} We find it rather surprising that already 2-layer floor plans have enough complexity to yield a super-exponential number of equivalence classes, especially in the light of the result that the number of general 3D plans without diagonal corners is only exponential. On the other hand we haven't been able to exploit multiple layers to construct significantly more floor plans. The best lower bound we have for $\log |\FF_n|$ is, asymptotically,  $\log |\TT_n|$.
\end{remark}

The paper is organized as follows. In Section~\ref{floorplans_section} we give precise definitions of the various notions related to floor plans. We also recall the quaternary corner-tree construction of Wang et. al. from~\cite{Cheng1}. In Section~\ref{lowerbound_section} we give a random construction to prove Theorem~\ref{randomconstr_theorem}. In Section~\ref{representation_section} we show how 8-tuples of labeled corner-trees can be used to encode a 3D floor plan. Then an easy observation is used to reduce the required number of trees from 8 to 4 and finish the proof of Theorem~\ref{upperbound_theorem}. Finally in Section~\ref{reconstruction_section} we prove Theorem~\ref{2layer_theorem} by showing that in the case of 2-layer floor plans much less information is sufficient to encode the labelings on the trees.

\section{Floor plans}\label{floorplans_section}

A mosaic floor plan is a subdivision of a large host box into $n$ smaller blocks. We shall only be interested in certain special kinds of floor plans called generic floor plans. A point in the host box will be referred to as a \textit{corner} if it is the vertex of at least one small block.  Figure~\ref{fig:corners} shows examples of local configurations of blocks adjacent a corner.

\begin{definition}
A corner is \textit{generic} if its "shape" is stable under any small perturbation of the blocks adjacent to it.  (We do not require the perturbation to be extendable to the whole arrangement, so this is a local notion!)
A floor plan is \textit{generic} if all its corners are.
\end{definition}

\begin{figure}[ht]
\begin{center}
\includegraphics[height=2in]{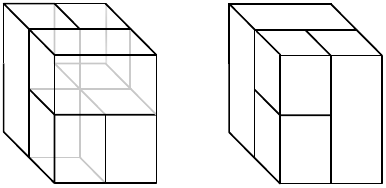}
\end{center}
\caption{Generic corners (in the middle of the cubes) \label{fig:corners}}
\end{figure}

\begin{remark} It is not hard to see that a non-generic 2D floor plan can always be perturbed into a generic one. The same is not true however for 3D floor plans. Figure~\ref{fig:nongeneric} shows a non-generic corner along with a local perturbation that changes it. Figure~\ref{fig:nongeneric2} shows a 3D floor plan which has such non-generic corners but that is globally rigid. Thus this notion of genericity is, in some sense, "incorrect". The main reason we still use this definition is because the properties of generic floor plans are much nicer compared to the non-generic ones.
\end{remark}

\begin{figure}[ht]
\centering
\begin{center}\def\svgwidth{2in}
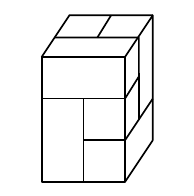
\end{center}
\caption{A globally rigid but configuration where the interior corners of the block A are non-generic. (The block labeled x reaches all the way to the bottom.)  
\label{fig:nongeneric2}}
\end{figure}

\subsection{Dissection planes}

There is a classical encoding of floor plans which we now recall. One can reconstruct a floor plan given the appropriate coordinates of each face of each box. (The $x$-coordinate of the sides orthogonal to the $x$-axis, etc.)  These coordinates cannot be arbitrary, however. When two blocks have touching faces, these sides obviously have to have the same coordinate value. Hence it is enough to store one value for each group of faces that must have the same coordinate. This motivates the following approach.

\begin{definition}\label{dissection_plane}~
\begin{itemize}
\item A \textit{dissection plane} (of a given floor plan) is the union of a maximal set of faces that must have the same coordinate value. More precisely let us say that two faces are attached if their intersection has positive area. Taking the equivalence relation generated by this, dissection planes are exactly the unions of equivalence classes.
\item The \textit{dissection graph} associated to a floor plan in a coordinate direction is an oriented graph. Its vertices are the dissection planes orthogonal to the given direction. For each block of the floor plan one draws an edge between the dissection planes containing the two faces of the block orthogonal to the given direction, and orients it towards the plane with the larger coordinate value.
\end{itemize}
\end{definition}

\begin{definition}\label{equivalence_defs}
Two floor plans are \textit{topologically equivalent} if their dissection graphs are isomorphic.
\end{definition}

\begin{figure}[ht]
\begin{center}
\includegraphics[height=2in]{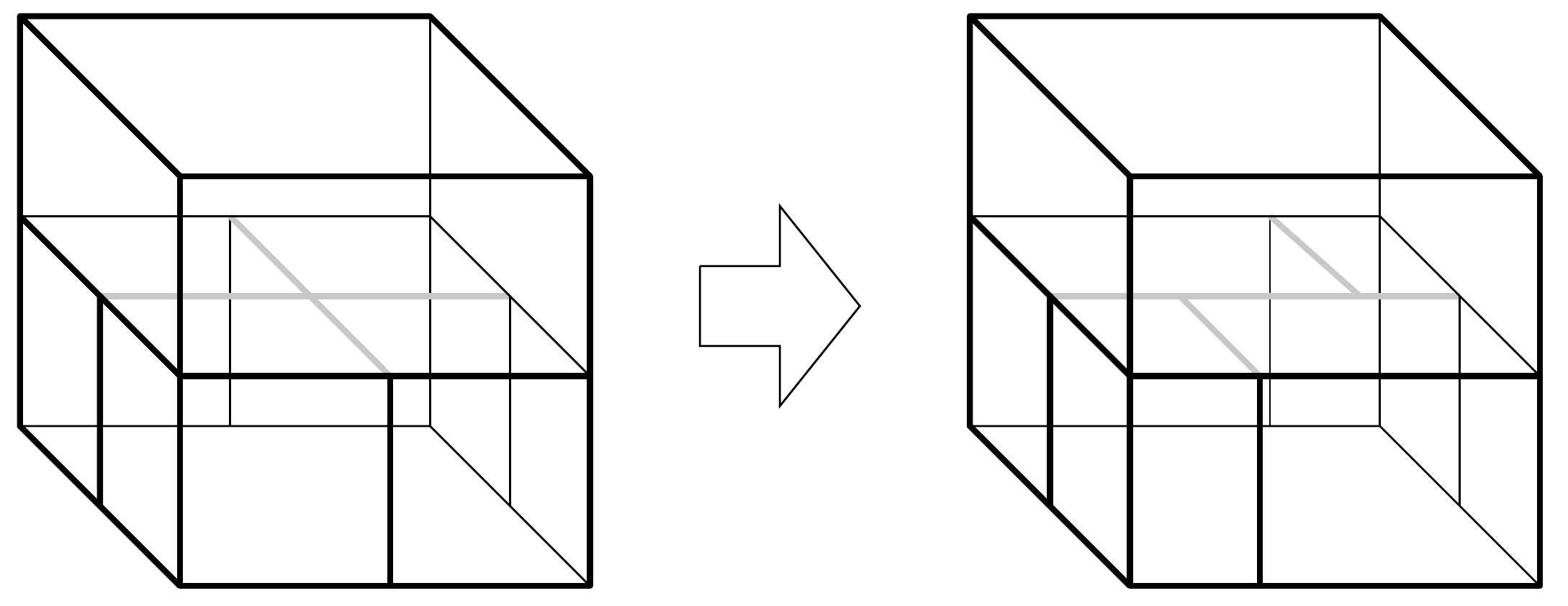}
\end{center}
\caption{A non-generic corner and its perturbation \label{fig:nongeneric}}
\end{figure}

\begin{remark}
The name 'topological equivalence' comes from the following observation. Imagine that the walls of the boxes are mobile, as in a Japanese house. A deformation of a floor plan consists of a sequence of translations of walls, without ever creating empty spaces, and such that each moves starts and ends in a locally generic configuration.  It is easy to see that deformations preserve the dissection graphs, but it is a well known fact that the converse also holds: floor plans with isomorphic dissection graphs can be deformed into each other.

 On the other hand, given the dissection graphs, one can reconstruct the equivalence class of the floor plan as follows. The dissection graphs determine a partial order on the set of dissection planes. Extending this arbitrarily to a complete order, and assigning coordinate values respecting this order determines a choice for the coordinates of each block, hence one gets the desired floor plan.
\end{remark}

\begin{definition}\label{def:segment} We will also use a one-dimensional analogue of dissection planes. Similarly to faces, we can say that two edges (of two blocks) are attached if their intersection has positive length. Extending this to an equivalence relation, unions of the equivalence classes are going to be referred to as \textit{supporting segments}. It is not hard to see (by looking at Figure~\ref{fig:corners}) that, when two edges are attached, they lie in the intersection of two perpendicular dissection planes.
\end{definition}

\subsection{Corner trees}

We shall need one more construction from~\cite{Cheng1} that will be used to encode floor plans, the so-called corner tree. Let us choose a corner of the host box and denote it by $P$. Each block $A$ in the floor plan has a corresponding corner, the one closest to $P$.  Examining the possible shapes of a generic corner, it is not hard to see that there is always exactly one other block $B$ that has the same corner, unless this corner is $P$ itself. There are 4 possibilities for relative position of the second block with respect to the first block. They are either touching along a face in either of the three directions, or they intersect only in the corner itself.  We say that, with respect to $P$, the block $B$ is the ($x$/$y$/$z$/$d$)-parent of $A$ in the four cases respectively (where $d$ stands for diagonal). Figure~\ref{fig2} shows the four types of adjacencies.

\begin{definition}\label{corner_tree}
Suppose we are given a floor plan with the blocks labeled from $1$ to $n$. For each corner $P$ of the host box, let $T_P$ be a tree whose vertices are the blocks, and each block is connected by an edge to its unique parent with respect to $P$. Vertices are labeled $1$ to $n$ according to the numbers in the floor plan, and edges are labeled according to the type of the parent relation. The edge and vertex labeled tree constructed this way is called a \textit{corner tree} of the floor plan. The tree has a unique root, the block that is adjacent to the corner $P$. It will also be useful to consider the corner tree without the vertex labels. This will be denoted by $\Tti_P$.
\end{definition}

It is easy to see that corner trees are invariant under deformations, hence topologically equivalent floor plans have isomorphic corner trees. Furthermore, every vertex in a corner tree can have at most four children, one for each type of edge. Thus a corner tree is a rooted edge-labeled quaternary tree on the set $\{1,2,\dots, n\}$ with each vertex having at most one child for each of the four possible labels.

The labeling of the blocks not does not make a huge difference in this construction. For each floor plan there are exactly $n!$ ways to add the labels, hence enumerating labeled or unlabeled floor plans is equivalent. To encode labeled floor plans, we will use labeled corner trees. To encode unlabeled floor plans one can use the same set of trees, but with the labeling of the first tree arbitrary, and the labels are used only as an identification of vertices among trees. In other words, instead of considering eight trees with vertex labels, one can think of the eight trees given on the same vertex set, without any labels. This approach will be particularly useful when dealing with 2-layer floor plans.

In 2D it turns out that two corner trees corresponding to two opposite corners completely determine the dissection graphs of the floor plan, hence also its topological equivalence class. Furthermore, rather surprisingly, the vertex labeling of one tree can be reconstructed from the vertex labeling of the other tree, hence only one of the trees have to be vertex labeled. Since the blocks of a floor plan can be numbered arbitrarily, this means that 2D floor plans can be encoded with a pair of binary trees with only edge-labels.  Since the number of (pairs of) binary trees is at most exponential in $n$, so is the number of floor plans.  This is, however, not the case in 3D.

In~\cite{Cheng1} the authors claim that, analogously to the 2D case, a pair of (vertex labeled) corner trees corresponding to two opposite corners completely determine the dissection graphs of the floor plan.   Since there are exponentially many pairs of quaternary trees, and $n!$ identifications between vertices in the pair this would give an upper bound of the form
$n! \cdot \expo(n)= n^{(1+o(1))n}$. However their proof is incomplete and we do not presently see if the gap can be filled.

\section{Lower bound construction}\label{lowerbound_section}

In this section we prove Theorem~\ref{randomconstr_theorem} by constructing so many distinct floor plans.

Consider an $N \times N$ grid, and $S \subseteq [N]\times [N]$.  Construct a floorplan $F_S$ as follows:

\begin{figure}[ht]
\begin{center}
\includegraphics[height=2in]{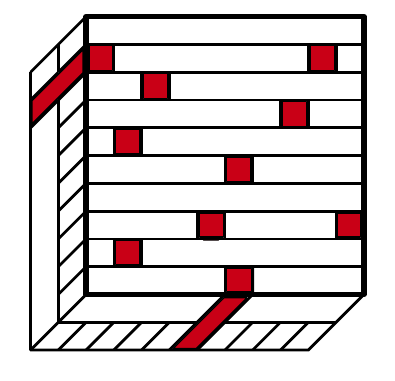}
\end{center}
\caption{Vertical blocks in the positions of $S$}
\end{figure}

Place a height two block at each position in $S$.  The remainder of the top level is filled in with horizontal strips, while the bottom level is
filled in with vertical strips.  For each set $S$, this yields a unique (though not necessarily generic) floorplan.  Let us now count the total
number of blocks.  This floor plan has $|S|$ blocks of height $2$.  If $|S|=0$, there are $N$ vertical and $N$ horizontal strips.  Meanwhile, each element in $S$
which is not on the boundary of the grid, and not immediately adjacent to another element in $S$ adds an additional vertical and horizontal strip.  We
say that $S$ is {\it good} if this holds for every element in $S$.  If $S$ is good, then there are $N + |S|$ horizonal and vertical strips, thus in total
there are $2N + 3|S|$ blocks.  If, in addition, there are no elements of $S$ that are diagonally incident then the floorplan $F_S$ is generic, thus
$F_S$ represents an equivalence class in $\TT_{2N + 3|S|}$.

It is clear that $S$ can be recovered from $F_S$.  Thus the number of good $S$ where there are no diagonally incident elements gives a lower bound on $|\TT_n|$.

\begin{lemma} \label{prop1}
Fix $0 \leq t \leq \frac{(N-2)^2}{10}$ and $N$ sufficiently large.  Let $S \subset \{2, \dots, N-1\} \times \{2, \dots, N-1\}$ be a uniformly randomly chosen set of size $t$.  Then
\[
\p(\mbox{$S$ good and $F_S$ generic}) \geq \exp\left( - 50 \frac{t^2}{N^2}\right).
\]
\end{lemma}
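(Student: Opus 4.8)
The plan is to first restate the event purely combinatorially, and then to bound its probability by a sequential without-replacement exposure of the chosen cells.

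First I would reduce to an independence condition. Write $m=(N-2)^2$ for the number of available interior cells. The set $S$ fails to be \emph{good} precisely when it contains two orthogonally adjacent cells (differing by $1$ in a single coordinate), and $F_S$ fails to be generic precisely when $S$ contains two diagonally adjacent cells (differing by $1$ in both coordinates). Thus ``$S$ good and $F_S$ generic'' is exactly the statement that no two cells of $S$ lie within $\ell_\infty$-distance $1$, i.e. that $S$ is an independent set in the \emph{king graph} on the interior grid. The only structural input this requires is that each interior cell has at most $8$ king-neighbours.

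Next I would expose the cells one at a time. A uniformly random $t$-subset has the same distribution as the support of a uniformly random ordered tuple $(c_1,\dots,c_t)$ of distinct interior cells, so by the chain rule
\[
\p(S \text{ king-independent}) = \prod_{i=1}^{t}\p\big(c_i \text{ not king-adjacent to } c_1,\dots,c_{i-1}\ \big|\ c_1,\dots,c_{i-1}\text{ king-independent}\big).
\]
Conditioned on any admissible prefix, $c_i$ is uniform over the $m-(i-1)$ unused cells, of which at most $8(i-1)$ are king-adjacent to some earlier cell; hence each factor is at least $1-\tfrac{8(i-1)}{m-(i-1)}$, giving $\p(S\text{ king-independent}) \ge \prod_{i=1}^{t}\big(1-\tfrac{8(i-1)}{m-(i-1)}\big)$.

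Finally I would pass from the product to the exponential. Taking logarithms and applying $\log(1-x)\ge -\tfrac{x}{1-x}$ shows the log-probability is at least $-\sum_{i=1}^{t}\tfrac{8(i-1)}{m-9(i-1)}$. This is exactly where the hypothesis $t\le m/10$ enters: for $i\le t$ one has $9(i-1)\le 9t\le \tfrac{9}{10}m$, so $m-9(i-1)\ge m/10$ and the sum is at most $\tfrac{80}{m}\sum_{i=1}^{t}(i-1)=\tfrac{40t(t-1)}{m}\le \tfrac{40t^2}{(N-2)^2}$. Since $40/(N-2)^2\le 50/N^2$ once $N$ is large enough, this yields $\p \ge \exp(-50\,t^2/N^2)$. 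I expect no conceptual obstacle here --- the argument is essentially a union bound made multiplicative by sequential conditioning --- and the only delicate point is constant-chasing: the factor $8(i-1)/(m-(i-1))$ can be nearly $8/9$, so the crude estimate $\log(1-x)\ge -2x$ is unavailable, and one must instead use $\log(1-x)\ge -x/(1-x)$ together with $t\le m/10$ to keep $m-9(i-1)$ above $m/10$. Tracking these constants, and absorbing the gap between $(N-2)^2$ and $N^2$ into ``$N$ sufficiently large,'' is what produces the stated constant $50$.
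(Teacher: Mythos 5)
Your proof is correct and follows essentially the same route as the paper: both expose the $t$ cells sequentially, bound each conditional factor by counting the at most $8$ or $9$ cells forbidden per previously chosen cell, apply the inequality $\log(1-x)\ge -x/(1-x)$, and use $t\le (N-2)^2/10$ to keep the denominators of order $(N-2)^2$ before absorbing constants into the hypothesis that $N$ is large. Your bookkeeping (denominator $m-(i-1)$ with $8(i-1)$ forbidden unused cells, versus the paper's $1-\tfrac{9i}{(N-2)^2}$) is marginally tighter but not a different argument.
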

\begin{proof}
Choose $S$ to be a uniformly randomly chosen {\it ordered} set of size $t$.  Let $S_i$ denote the first $i$ elements of $S$, and let $\mathcal{A}_i$ denote the event that $S_i$ is good, and $F_{S_i}$ is generic.
\begin{align}
\p(\mathcal{A}_t) &= \p(\mathcal{A}_1)\p(\mathcal{A}_2|\mathcal{A}_1)\p(\mathcal{A}_3|\mathcal{A}_2) \cdots \p(\mathcal{A}_t|\mathcal{A}_{t-1}) \nonumber \\
& \geq 1 \cdot \left(1 - \frac{9}{(N-2)^2}\right) \cdot \left(1 - \frac{18}{(N-2)^2}\right) \cdots \left(1 - \frac{9(t-1)}{(N-2)^2}\right) \nonumber \\
& \geq \exp\left( - \sum_{i=1}^{t-1} \frac{9i}{(N-2)^2-9i} \right) \nonumber \\
& \geq \exp\left( - \sum_{i=1}^{t-1} \frac{9i}{(N-2)^2-9t} \right) = \exp\left( - \frac{9t(t-1)}{2((N-2)^2 -9t)} \right). \label{eqn1}
\end{align}
Here, the first inequality comes from the fact that the first $i$ blocks forbid at most $9i$ blocks for the $i+1$st choice, and the second inequality comes from the real number inequality
$(1-\frac{1}{x}) \geq e^{-\frac{1}{x-1}}$ for $x > 1$.  The result then follows noting that we used the fact that $t \leq \frac{(N-2)^2}{10}$ and the fact that $N$ is sufficiently large to somewhat simplify (\ref{eqn1}).
\end{proof}

To complete the construction note that for a fixed $N$ and $t$, where $t < \frac{(N-2)^2}{10}$ and $n = 2N+3t$, that

\begin{align}
|\TT_n| \geq \exp\left(-50\frac{t^2}{N^2}\right) {(N-2)^2 \choose t} &\geq \exp\left(-50 \frac{t^2}{N^2} \right) \left( \frac{(N-2)^2}{t}\right)^t \nonumber\\
&= \exp\left( t \log \left( \frac{(N-2)^2}{t}\right) - 50\frac{t^2}{N^2} \right). \label{eqn2}
\end{align}

Fix $t = N \log N$ and note that in this regime $n = (3+o(1))t$ and $\log(n) = (1+o(1)) \log(\frac{N}{\log N})$.  Using this in (\ref{eqn2}) gives the desired bound:
\begin{align*}
|\log \TT_n| \geq \left(\frac{1}{3}-o(1)\right) n \log n.
\end{align*}

\section{Tree representation scheme}\label{representation_section}

In this section we prove Theorem~\ref{upperbound_theorem} by giving a representation scheme for floor plans in $\FF_n$. We do this in two steps. First we show that the dissection graphs of a floor plan can be reconstructed from the eight-tuple of corner trees $(T_1, \dots, T_8)$ associated to the eight corners of the host box (see Figure~\ref{fig1} for the numbering of the corners). Then we show that knowing only $T_1, T_3, T_6$, and $T_8$, we can recover the four missing trees. The number of edge-labeled quaternary trees on $n$ vertices is exponential in $n$, hence this way we encode numbered floor plans with objects in a set of size at most $(n!)^4 c^n$. But numbered floor plans are exactly $n!$ as many as plain floor plans. Thus we get  $|\FF_n| \leq n!^3 c^n$ or equivalently
\[ \log |\FF_n| \leq 3n\log n + O(n).\]

\begin{figure}[ht]
\centering
\begin{center}\def\svgwidth{2in}
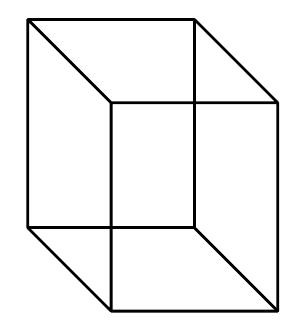
\end{center}
\caption{Corner/Tree identification\label{fig1}}
\end{figure}

\begin{figure}[ht]
\centering
\begin{center}
$\begin{array}{cccc}
\includegraphics[width=1.5in]{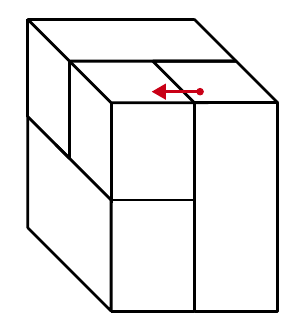} &
\includegraphics[width=1.5in]{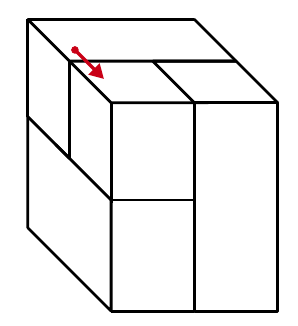} &
\includegraphics[width=1.5in]{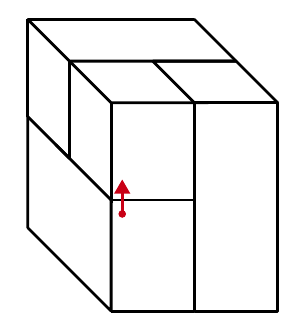} &
\includegraphics[width=1.5in]{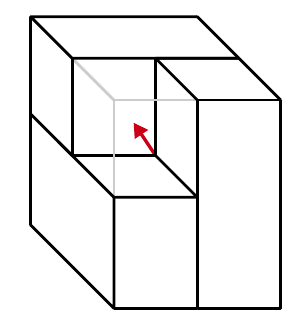}\\
\mbox{$x$-parent} & \mbox{$y$-parent} & \mbox{$z$-parent} & \mbox{$d$-parent}
\end{array}
$
\caption{Parent edge labels in $T_1$ tree\label{fig2}}
\end{center}
\end{figure}

\begin{proposition} The dissection graphs can be reconstructed from the corner trees $T_1, \dots, T_8$.
\end{proposition}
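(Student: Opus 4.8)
\section*{Proof proposal}

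The plan is to reconstruct the three dissection graphs one direction at a time; by symmetry I describe the $x$-direction. The key structural observation I will use is that the entire $x$-dissection graph is encoded by a single combinatorial relation on the blocks, namely oriented $x$-adjacency: say that $A$ lies immediately left of $B$ if the maximal-$x$ face of $A$ and the minimal-$x$ face of $B$ meet in positive area. Indeed, by Definition~\ref{dissection_plane} two faces are attached only if they are coplanar, and two coplanar $x$-faces can overlap in positive area only when one is a maximal-$x$ face and the other a minimal-$x$ face of such an adjacent pair, since faces on the same side of their blocks have disjoint interiors and meet in at most a segment. Hence the dissection planes orthogonal to $x$ are exactly the connected components of the bipartite graph $H_x$ whose vertices are the $x$-faces of the blocks (including the two bounding $x$-walls of the host box) and whose edges are the attached pairs just described. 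Each block then contributes exactly one edge of the $x$-dissection graph, joining the component of its minimal-$x$ face to the component of its maximal-$x$ face and oriented toward larger $x$; these two components are always distinct, since all faces in one component share a single $x$-value. Thus it suffices to recover, for each coordinate direction, the oriented adjacency relation from the eight corner trees.

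To recover these adjacencies I will read them off the parent labels. If $B$ is the $x$-parent of $A$ in the tree $T_P$, then by the definition of the parent relation $A$ and $B$ share the corner of $A$ closest to $P$ and touch along a face orthogonal to $x$ with $B$ on the side of $P$; since $P$ is a known vertex of the host box, this records an oriented $x$-adjacency, and similarly for the labels $y$ and $z$. The reason for using all eight corners is coverage: every vertex of a block is the closest corner to precisely one of the eight host-box corners, so across $T_1,\dots,T_8$ every vertex of every block appears as the distinguished corner of some block. I will then prove that the set of all $x$-labeled parent edges occurring in the eight trees is \emph{exactly} the set of oriented $x$-adjacencies, and likewise in the other two directions; feeding this into the previous paragraph reconstructs the graphs.

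The technical heart, and the step I expect to be the main obstacle, is establishing this last equality --- both completeness (no adjacency is missed) and soundness (no spurious pair is produced). The difficulty is that a shared face $F$ between $A$ and $B$ may be strictly contained in the larger face of one of the two blocks, so that the corners of $F$ are interior to that block and the adjacency is invisible from its side; one must then show it is witnessed from the other side, or at a different corner of $F$, in a suitable one of the eight trees. Here genericity is essential: it restricts the local picture at each corner to a short list of stable configurations (Figure~\ref{fig:corners}), and I will go through this list, checking which parent labels each configuration produces and confirming that exactly the incident face-adjacencies are recorded. I expect the diagonal ($d$-labeled) corners to be the delicate case, since these are precisely the configurations where no block extends locally and where a face-adjacency can be masked by a diagonal contact; correctly disentangling the adjacencies around a diagonal corner, by combining the information of several of the eight trees, is where the argument will demand the most care. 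Once the oriented adjacencies are known, forming the components of $H_x$, $H_y$, $H_z$ and drawing one oriented edge per block yields the three dissection graphs, and since the corner trees are deformation-invariant the output depends only on the topological equivalence class.
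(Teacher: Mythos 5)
Your reduction in the first paragraph is fine, and the soundness direction is fine; but the completeness claim at the heart of your plan --- that every oriented face-adjacency appears as an ($x$/$y$/$z$)-labeled parent edge in one of the eight trees --- is false, not merely delicate. The parent relation in $T_P$ is only defined between two blocks that share a \emph{vertex}: the corner of $A$ closest to $P$ must also be a corner of $B$. But in a generic 3D mosaic floor plan two blocks can be attached (positive-area face contact) while sharing no vertex at all: the shared rectangle $F$ can have all four of its corners at crossing points of an edge of $A$'s face with an edge of $B$'s face, so that every corner of $F$ is a vertex of neither block. This is not an exotic configuration; it is the typical situation in the paper's own lower-bound construction in Section~\ref{lowerbound_section}, where a bottom-layer vertical strip and a top-layer horizontal strip cross: the top face of the strip $[i-1,i]\times[0,N]\times[0,1]$ and the bottom face of the strip $[0,N]\times[j-1,j]\times[1,2]$ overlap in a unit square none of whose corners is a vertex of either block (for $2\leq i,j\leq N-1$). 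For such a pair, neither block is the parent of the other in any of the eight corner trees, so no case analysis of generic corner shapes can witness this adjacency --- there is no corner of either block at which it is visible. Passing to the transitive closure does not save you either: in that construction the plane between the two layers is a \emph{single} dissection plane, yet almost all attached (bottom strip, top strip) pairs are unwitnessed, so your graph $H_z$ splits into many components and your reconstruction would report many $z$-dissection planes where there is one.

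The missing idea is that unwitnessed attachments must be recovered \emph{indirectly}, by chaining through intermediate blocks, and this is exactly what the paper's proof does. It first recovers supporting segments (Definition~\ref{def:segment}): starting from an edge of a block, it repeatedly passes to the parent in the tree corresponding to the current corner, thereby tracing the cycle of blocks surrounding a common segment. It then assembles one side of a dissection plane by jumping between parallel faces that meet along a common supporting segment, and finally connects the two sides of the plane by tracing around a supporting segment on its boundary. This chaining through segments is precisely what picks up the crossing-strip adjacencies that no single corner-tree edge records; your proposal, which extracts each adjacency from a single parent edge, cannot reach them.
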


\begin{proof}
The vertices of the dissection graphs are the dissection planes, and edges are given by the blocks. If we know the list of dissection planes, and which face of which block belongs to which dissection plane, then we know the graphs themselves. Hence it is enough to figure out which face is in which dissection plane. But since dissection planes are defined as unions of faces in an equivalence class, it is enough to recover these equivalence classes.  Even though the equivalence classes were defined via the attachment relation, we cannot directly recover attachment from the list of corner trees. Instead, we shall use supporting segments (Definition~\ref{def:segment}) as an auxiliary tool.

First we shall show that, from the list of corner trees, one can recover which edges of which faces belong to the same supporting segment. This can be done by tracing a cycle of blocks around a supporting segment. Suppose we want to trace the segment that contains edge $X_0 X_1$  of block $B_0$. (See Figure~\ref{fig:segment}.) Then we look up $B_0$ in the tree corresponding to the $X_1$ corner, and find its parent $B_1$. Depending on what type of parent $B_1$ is, one of its edges, $X_1 X_2$,  will belong to the same supporting segment as $X_0 X_1$. Then we take the tree corresponding to the $X_2$ corner of $B_1$, find the parent of $B_1$, and denote it by $B_2$. The edge $X_2 X_3$ in $B_2$ then has to belong to the same supporting segment. When we reach the end of the segment, the process naturally "turns around", and starts finding blocks backwards along the segment. Then eventually it hits the other end of the segment and starts coming back, until it finally hits $B_0$ again. At this point we have a complete list of blocks adjacent to the supporting segment. Also, for each block, we know which edge of it belongs to the segment. This whole process is depicted in Figure~\ref{fig:segment}.

\begin{figure}[ht]
\begin{center}
\def\svgwidth{5in}
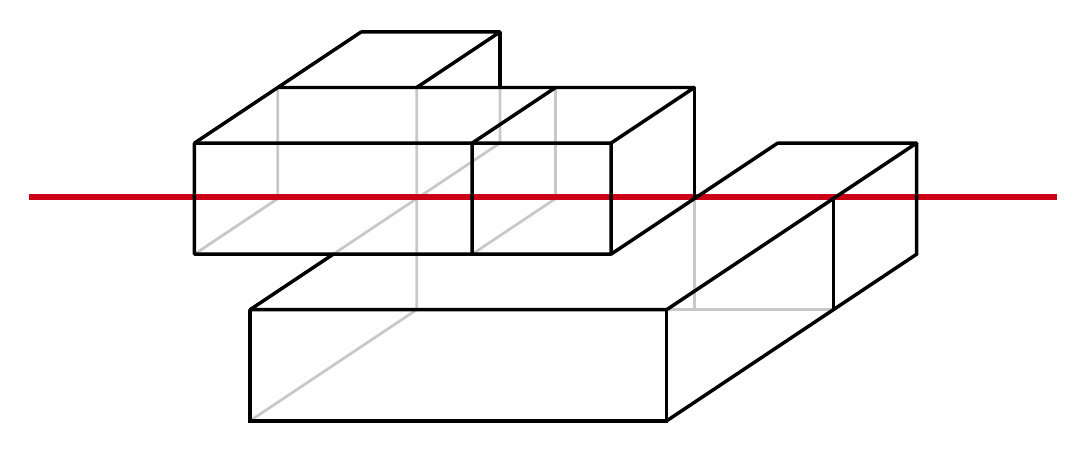
 \[A \xrightarrow{(d)} B \xrightarrow{(y)} C \xrightarrow{(d)} D \xrightarrow{(y)} E \xrightarrow{(x)} A \]
\end{center}
\caption{Tracing the supporting segment ${e=XY}$ \label{fig:segment}}
\end{figure}

Now we are ready to recover dissection planes.  Each dissection plane has two sides. By the definition of attachment, a dissection plane has connected interior. Thus, any face adjacent to one side can be reached from any other face adjacent to the same side by jumping from face to adjacent face. (Two faces are adjacent if they share an edge.) But when two faces are adjacent, their intersecting edges necessarily belong to the same supporting segment. Hence, starting from a face adjacent to a supporting segment, using the tracing method described above, one can find all other parallel faces that are adjacent to the same supporting segment. (And all such faces are necessarily contained in the same dissection plane!) Iterating this method, by connectivity, one eventually finds all faces on one side of a dissection plane. Finally the sides of a dissection plane can be connected together by tracing around a supporting segment on the boundary.

\end{proof}

\begin{proposition}[4 trees suffice]\label{4trees}  $T_2, T_4,T_5$ and $T_7$ are determined by $T_1,T_3,T_6$ and $T_8$.
\end{proposition}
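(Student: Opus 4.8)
The plan is to replace the global reconstruction by a single \emph{local} observation: that the edges of a corner tree, broken up by their type $x,y,z,d$, transform in a completely transparent way under reflecting the host corner in the corresponding coordinate direction(s). Granting this, the proposition becomes a bookkeeping statement about which corners are kept. Labelling the eight corners of the box by $\{0,1\}^3$ (one bit per coordinate direction) and reading off the numbering of Figure~\ref{fig1}, the retained set $\{T_1,T_3,T_6,T_8\}$ is one of the two tetrahedra inscribed in the cube, i.e.\ a parity class of $\{0,1\}^3$ (equivalently, no two of these four corners lie on a common edge of the box), and $\{T_2,T_4,T_5,T_7\}$ is the complementary parity class. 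The only feature of this partition I will use is that flipping an \emph{odd} number of coordinates exchanges the two classes: hence for every missing corner $Q$, the three corners $Q\oplus x,\ Q\oplus y,\ Q\oplus z$ obtained by flipping a single coordinate, together with the antipode $\bar Q = Q\oplus x\oplus y\oplus z$, all lie in the retained set.

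The key observation I would establish is the following reversal rule. Suppose $B$ is the parent of $A$ in $T_P$, of type $\tau\in\{x,y,z,d\}$, and let $v$ be the corner they share (the corner of $A$ closest to $P$). Let $P\oplus\tau$ denote the corner obtained from $P$ by flipping the single coordinate $x$, $y$ or $z$ when $\tau$ is a face type, and flipping all three coordinates when $\tau=d$. Then $v$ is exactly the corner of $B$ closest to $P\oplus\tau$, and in $T_{P\oplus\tau}$ the block $A$ is the parent of $B$, again of type $\tau$. The proof is an octant analysis at $v$: up to reflecting the axes we may take $P=(0,0,0)$, so $v$ is the minimal corner of $A$ and $A$ fills the $(+,+,+)$ octant at $v$. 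The four parent types put $B$ in the octant across the $x$-, $y$- or $z$-face, or in the opposite octant $(-,-,-)$. Reading off which corner of $B$ the point $v$ is in each case (for $\tau=z$, say, $B$ fills the $(+,+,-)$ octant, so $v$ is the $(\min x,\min y,\max z)$ corner of $B$, which is its corner closest to $P\oplus z$) verifies the claim and shows the adjacency type is unchanged. Consequently the type-$\tau$ edges of $T_P$ and of $T_{P\oplus\tau}$ are the very same unordered adjacencies with parent and child interchanged, so $T_{P\oplus\tau}$ determines all type-$\tau$ edges of $T_P$.

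The reconstruction then assembles $T_Q$ one edge-type at a time. For a missing corner $Q$, recover the $x$-edges of $T_Q$ by reversing the $x$-edges of $T_{Q\oplus x}$, the $y$-edges from $T_{Q\oplus y}$, the $z$-edges from $T_{Q\oplus z}$, and the $d$-edges from $T_{\bar Q}$; by the parity remark all four source trees are among $T_1,T_3,T_6,T_8$ (indeed each missing tree is rebuilt using all four retained trees, one per edge type). Since every non-root block of $T_Q$ has exactly one parent edge, of a well-defined type, the union of the four recovered edge-sets is precisely the edge set of $T_Q$, and the root is the unique block left without a parent. As the recovered adjacencies are between labelled blocks, this also fixes the vertex labelling. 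Doing this for $Q\in\{2,4,5,7\}$ proves that $T_2,T_4,T_5,T_7$ are determined by $T_1,T_3,T_6,T_8$.

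I expect the only real content, and hence the main obstacle, to be the octant case analysis behind the reversal rule: one must check, for each of the four generic parent types (using the classification of generic corners from Figure~\ref{fig2}), that the shared corner $v$ is genuinely the closest corner of $B$ to the reflected host corner and that the type label is preserved. A small degeneracy to dispatch along the way is that $v$ can never equal the reflected host corner $P\oplus\tau$ itself: two blocks meet at $v$, whereas exactly one block meets at any corner of the host box, so $B$ is indeed a non-root vertex of $T_{P\oplus\tau}$ and its parent there is legitimately $A$. Once the four cases are checked, the parity bookkeeping and the edge-by-edge assembly are routine.
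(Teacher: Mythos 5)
Your proposal is correct and is essentially the paper's own argument: the core ``reversal rule'' (a type-$\tau$ edge in $T_P$ appears reversed, with the same label, in the tree at the corner obtained from $P$ by flipping the corresponding coordinate, or at the opposite corner when $\tau=d$) is exactly the paper's main observation, and your parity/tetrahedron bookkeeping is its remark that each missing tree's edges are recovered from the three adjacent corner trees and the one opposite corner tree. The only difference is one of detail: you prove the reversal rule by an explicit octant analysis (and dispatch the root degeneracy), where the paper simply reads it off from Figure~\ref{fig2}.
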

\begin{proof}
The main observation is that there is a bijection between edges of the four unknown trees and the edges of the four given trees as follows. If there is a diagonal edge in $T_1$, say block $A$ is the diagonal parent of block $B$, then clearly there is a diagonal edge in $T_7$, namely block $B$ is the diagonal parent of block $A$ in that tree. And vice versa. Similarly, by looking at figure~\ref{fig2} one can see that if and only if $A$ is an ($x$/$y$/$z$)-parent of $B$ in $T_1$, if and only if is then $B$ an ($x$/$y$/$z$)-parent of $A$ in $T_2$/$T_4$/$T_5$ respectively.  Hence each edge in a tree can be recovered from the trees corresponding to the three adjacent vertices and the one opposite vertex. This proves the proposition, and also completes the proof of Theorem~\ref{upperbound_theorem}.
\end{proof}

\section{Label reconstruction}\label{reconstruction_section}

In this section we prove Theorem~\ref{2layer_theorem}. Given a two layer floor plan $F$, we know that it can be reconstructed from its eight labeled corner trees. The trees themselves can only be of exponentially many types, however the labeling is too expensive for our purposes. We have already seen that instead of vertex labels what we really need is bijections between the vertex sets of the trees. Even using Proposition~\ref{4trees} we would need three such bijections, and that is way more data than the $n^{n/3}$ bound we are aiming for. The solution is that instead of recording all the identifications between vertex sets, we will only record partial identifications, and use intrinsic geometric constraints of the floor plan to recover the missing identifications.

So let us consider the unlabeled (or, rather, only edge-labeled) versions of our corner trees $\Tti_1, \dots, \Tti_8$ associated to the two-level floor plan $F$.  The two layers will be in the $z$ direction. We will refer to the two layers as the top and the bottom. Corners $1,2,3,4$ are on the top, $5,6,7,8$ are on the bottom.

\begin{definition}[Identification set]
The full identification of a floor plan $F$ is the set
\[\FI = \{(x,y): x \in V(\Tti_i), y \in V(\Tti_j) \mbox{ where $x$ and $y$ represent the same block of $F$ in two different trees}\}.\]
An \textit{identification set} is any subset $\II \subset \FI$. An identification set $\II$ is \textit{strong} if, together with the eight unlabeled trees, it determines $\FI$.
\end{definition}

Our goal is then to find strong identification sets as small as possible. Theorem~\ref{randomconstr_theorem} shows that any strong identification set has to be of size at least $n/3$ asymptotically. We are going to show that for two-layer floor plans this can be achieved.  

This will complete the proof of Theorem~\ref{2layer_theorem}, by giving an injection from floorplans into the space of $8$-tuples of $\{x,y,z,d\}$-edge labeled quaternary trees and
strong identification sets of size $n/3$.  Note that a identification set consists of pairs $(x,y)$ so that no $x$ and no $y$ is in more than $8$ pairs.  Therefore the number
of identification set of size $t$ is given by a two (multi)sets of size $t$ and a bijection between them.  The number of such possibilities is at most 
\[
expo(n) \times (n/3)! = n^{(1+o(1))n/3}.
\]
Since the number of $\{x,y,z,d\}$-edge labeled quaternary trees is also exponential, we will have given an injection into a space of size $n^{(1+o(1))n/3}$, which will complete the proof.

First we need some notation. Blocks of a floor plan $F \in \TT_n$ can be broken into three classes, $T,B,D$ (as in top, bottom, double), denoting the blocks occupying only the top, only the bottom, or both layers.  For a block $x \in F$ let $x_i \in \Tti_i  (i = 1,\dots,8)$ denote the vertex in $\Tti_i$ which represents $x$.  It is in general impossible, given an $x_i \in \Tti_i$ to determine $x_j \in \Tti_j$ only knowing the unlabeled trees.  However certain elements of $\FI$  can be automatically reconstructed, as the following proposition shows.

\begin{proposition} \label{prop:free}
If $x \in T \cup D$, and $i,i' \in \{1,2,3,4\}$, $x_i$ can be determined from $x_i'$.  Likewise, if $x \in B \cup D$, and $i,i' \in \{5,6,7,8\}$ then $x_{i}$ can be determined from $x_i'$.
\end{proposition}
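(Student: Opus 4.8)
```latex
\noindent\textbf{Proof proposal.}

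The plan is to reduce the claim to a statement about two-dimensional
floor plans by restricting attention to a single layer, where the
duality of corner trees (already exploited in the 2D case recalled in
Section~\ref{floorplans_section}) does the work. The key observation is
that corners $1,2,3,4$ all lie on the top face of the host box, so the
block of $F$ nearest to each of these four corners depends only on how
the top layer is subdivided. I would first argue that if $x \in T \cup
D$ then $x$ occupies the entire top layer in its vertical extent, so the
top face of $x$ is a genuine rectangle in the top-face subdivision of
$F$. Consequently the restriction of $F$ to the top layer induces an
honest 2D mosaic floor plan $F^{\rm top}$ on the blocks in $T \cup D$,
and the corner trees $\Tti_1,\dots,\Tti_4$ are \emph{exactly} the four
corner trees of $F^{\rm top}$ (restricted to the vertices in $T\cup D$),
since the $x$/$y$/$d$ parent relations seen from a top corner are
computed purely within the top face. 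The same reasoning applies on the
bottom with corners $5,6,7,8$ and the blocks in $B \cup D$.

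Granting this reduction, I would invoke the classical 2D fact stated
earlier in the excerpt: in two dimensions, a single corner tree
determines the topological equivalence class of the floor plan, and
moreover the vertex labeling of one corner tree can be reconstructed
from the vertex labeling of another (this is precisely the statement
that allows 2D floor plans to be encoded by a pair of edge-labeled trees
with only one labeled). Since $\Tti_i$ and $\Tti_{i'}$ for $i,i' \in
\{1,2,3,4\}$ are two corner trees of the same 2D plan $F^{\rm top}$,
knowing which vertex $x_i \in \Tti_i$ represents a block pins down the
corresponding vertex $x_{i'} \in \Tti_{i'}$ via this 2D label-transfer
procedure. The bottom case is identical, replacing $F^{\rm top}$ by
$F^{\rm bottom}$ and the index set $\{1,2,3,4\}$ by $\{5,6,7,8\}$.

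The main obstacle I anticipate is justifying cleanly that the four
top-corner trees of $F$ genuinely coincide with the corner trees of a
bona fide 2D floor plan. The subtlety is that the nearest corner of a
block $x \in T \cup D$ to a top corner $P$ is a \emph{top} corner of
$x$, and that the parent block $B$ sharing that corner must also reach
the top layer (i.e. lie in $T \cup D$) rather than being some bottom
block sitting underneath; one must check, using genericity and the
two-layer structure, that the $z$-parent relation never appears among
top corners (a block adjacent to a top corner cannot have another block
stacked above it within the host box), so that only $x$/$y$/$d$ edges
occur and these are determined entirely by the top-face subdivision.
Verifying this, together with the fact that a top-only block $x \in T$
has a well-defined rectangular footprint in the top face, is the one
place where the geometry of the specific two-layer configuration —
rather than an abstract appeal to the 2D theory — is really needed.
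Once that is established, the proposition follows immediately from the
cited 2D reconstruction result, so I would keep the 2D input as a black
box and devote the argument to the footprint/parent-type verification.
```
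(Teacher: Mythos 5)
Your reduction is exactly the paper's: the top layer of $F$ is a 2D mosaic floor plan on the blocks $T \cup D$, and the rooted subtrees of $\Tti_1,\dots,\Tti_4$ spanned by $T\cup D$ (equivalently, the components of the root after deleting $z$- and $d$-edges) are precisely the four corner trees of that 2D plan. One correction to your verification step: you rightly rule out $z$-parents (nothing lies above the top face), but you should also rule out $d$-edges among blocks of $T\cup D$. A diagonal contact at a corner lying on the top face would be an X-crossing of the top-face subdivision, which is non-generic even in the 3D sense (the vertical extent of the blocks adds no rigidity against horizontal sliding in a two-layer plan). This matters, because if $d$-edges could occur, the restricted trees would not be honest binary $x/y$-labeled 2D corner trees and the 2D machinery you want to quote would not apply to them.

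The genuine gap is in how you invoke that machinery. You state the 2D input as ``a single corner tree determines the topological equivalence class of the floor plan.'' That is false: 2D floor plans are counted by Baxter numbers (growth roughly $8^n$), while single edge-labeled corner trees number only about $4^n$, so one tree cannot determine the plan. The correct fact, which is what the paper imports from the dual-trees reference, is that a pair of corner trees at \emph{opposite} corners determines the plan, and that the bijection between the vertex sets of such an opposite pair is recoverable from the edge-labeled structure alone. With only this correct statement, your claimed label transfer between $\Tti_i$ and $\Tti_{i'}$ is justified for the pairs $(1,3)$ and $(2,4)$, but not for adjacent pairs such as $(1,2)$, where no direct transfer theorem exists. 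The paper closes exactly this hole: it first reconstructs the 2D plan, together with the vertex bijection, from the opposite pair $\Tti_1,\Tti_3$; then it observes that the subtrees of $\Tti_2$ and $\Tti_4$ are corner trees of the now-reconstructed plan, so their vertices can be matched to its blocks --- and the matching is unique because an edge-labeled corner tree, having at most one child of each label at every vertex, admits no nontrivial automorphisms. That extra reconstruction-and-matching step is what your write-up is missing; with it, your argument coincides with the paper's proof.
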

\begin{proof}
The top and bottom planes are 2D floor plans with blocks from $T \cup D$ and $B\cup D$ respectively.  The rooted subtree of $\Tti_i$, $1\leq i \leq 4$ obtained by deleting all $z$- and $d$-edges are precisely
the corner trees associated the corners of this 2D floor plan.  We know from~\cite{dual trees} that a 2D floor plan can be reconstructed from two of its corner trees corresponding to opposite corners. Furthermore, these trees do not have to be vertex-labeled. The bijection between their vertex sets can be recovered from the tree structure and edge-labeling. Hence looking at the subtrees obtained from $\Tti_1$ and $\Tti_3$ one can find the identification between the vertices in these two subtrees. Then the 2D floor plan can be reconstructed (up to topological equivalence). Finally, since the appropriate subtrees of  $\Tti_2$ and $\Tti_4$ are corner trees of the same 2D floor plan, their vertices can be identified with the blocks in this floor plan, and hence to the vertices of $\Tti_1$ and $\Tti_3$. The second part of the proposition follows identically.
\end{proof}

For the remainder, we prove the following:
\begin{proposition}\label{prop:smallstrong}
There exists a strong identification set $\II$ such that $|\II| \leq n/3$. 
\end{proposition}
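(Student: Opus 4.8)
The plan is to exhibit an explicit strong identification set $\II$ of size at most $n/3$ and argue that, together with the eight unlabeled trees, it determines all of $\FI$. The guiding principle is Proposition~\ref{prop:free}: within each layer, the four top trees (resp.\ bottom trees) are already mutually identified for free, so the only genuinely missing data are identifications \emph{across} the top/bottom boundary. Thus I would think of the problem as gluing together two 2D floor plans -- the top arrangement on blocks $T\cup D$ and the bottom arrangement on $B\cup D$ -- where the only nontrivial unknown is how the ``double'' blocks $D$ (those spanning both layers) sit simultaneously in both 2D plans. Concretely, I would pick one representative tree from the top group, say $\Tti_1$, and one from the bottom group, say $\Tti_5$, and aim to record identifications only between these two trees; by Proposition~\ref{prop:free} any such pair propagates to all eight trees, so it suffices to pin down the correspondence between the blocks of the top plan and the blocks of the bottom plan.

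The counting target $n/3$ suggests the following bookkeeping. Write $|T|=a$, $|B|=b$, $|D|=c$, so $n=a+b+c$. The blocks of $D$ appear in both 2D plans, so a natural candidate for $\II$ is a set of pairs $(x_1,x_5)$ ranging over $x\in D$ together with enough extra pairs to let the tracing/reconstruction machinery recover the rest. The key geometric observation I would exploit is that a double block occupies a full-height position and hence must appear as a block in \emph{both} the top and the bottom 2D floor plan, whereas a top-only (resp.\ bottom-only) block appears in only one. I would argue that once the positions of the $D$-blocks are identified across the two layers, the remaining $T$- and $B$-blocks are forced: each is recoverable from the 2D reconstruction of its own layer (already available by Proposition~\ref{prop:free}) plus the relative placement of the $D$-blocks, which act as fixed landmarks common to both plans. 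So $|\II|$ should be driven essentially by $|D|=c$, and I would then need the inequality $c \le n/3$, i.e.\ that double blocks never make up more than a third of all blocks.

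The main obstacle, and the step I expect to require the most care, is proving that inequality $c \le n/3$ (equivalently $|\II| \le n/3$). This is a genuine structural fact about two-layer mosaic floor plans rather than a formality: one must show that every double block is ``paid for'' by at least two single blocks. I would try a charging argument on the dissection structure -- for instance, showing that the vertical faces of each $D$-block force enough distinct supporting segments, and hence enough $T$- or $B$-blocks adjacent to them, that no more than a third of the blocks can be double. A clean way to organize this may be to observe that the top 2D floor plan has $a+c$ blocks and the bottom has $b+c$ blocks, and that a block of $D$ contributes a face to a dissection plane shared with its 2D neighbours in a way that prevents the $D$-blocks from tiling a layer by themselves; ruling out the all-double configuration and quantifying the slack gives the factor $1/3$.

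The final step is to verify the strongness of $\II$: that the recorded pairs really do determine $\FI$ and not merely a floor plan consistent with the data. Here I would run the argument in the forward direction -- assume two floor plans $F,F'$ give the same eight unlabeled trees and the same $\II$, apply Proposition~\ref{prop:free} to reconstruct both the top and bottom 2D layers of each (these agree since the unlabeled trees agree), use the identifications in $\II$ to match the $D$-blocks of $F$ with those of $F'$, and then invoke the uniqueness of the 2D reconstruction from a pair of opposite corner trees to conclude that the remaining blocks, and hence the full identification $\FI$, coincide. The delicate point to check is that the tracing procedure genuinely has no ambiguity once the $D$-landmarks are fixed -- that is, that a top-only block and a bottom-only block can never be confused, which follows because they live in disjoint 2D plans and the edge-labels distinguish the $z$-adjacencies that witness doubleness.
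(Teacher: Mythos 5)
Your first step---recording the cross-layer identifications of the double blocks and arguing that the set $\{(x_1,x_5):x\in D\}$ is strong---is exactly the paper's Lemma~\ref{lem:complete} (the set $\II_D^{i,j}$), and that part of your plan is sound in outline, although the actual reconstruction is not just a matter of ``fixed landmarks'': it requires tracing identification loops through several trees, using the local geometry of $z$- and $d$-edges. The fatal problem is your counting step. You reduce everything to the inequality $|D|\le n/3$, and this inequality is simply false: nothing prevents almost all blocks of a two-layer mosaic floor plan from being double. Take any generic 2D mosaic floor plan, extrude every rectangle through both layers, and split one of the resulting blocks into a stacked top/bottom pair; this is a generic two-layer plan with $|T|=|B|=1$ and $|D|=n-2$. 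So no charging argument can show that each double block is ``paid for'' by two single blocks, and your proposed identification set, whose size is $|D|$, can be as large as $n-2$.

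The paper's way around this is not to bound $|D|$ at all, but to construct \emph{three} different strong identification sets and pigeonhole. Besides $\II_D^{i,j}$ (size $|D|$), it builds sets $\II_T^2$ and $\II_T^4$ keyed to the top-only blocks: each records, for every vertex of $\Tti_5$ or $\Tti_7$ that has a $z$- or $d$-parent, where that vertex sits in one designated top tree ($\Tti_2$ for one set, $\Tti_4$ for the other, according to a classification of these edges into types $z_2,z_4,d_2,d_4$). Lemma~\ref{lem:top} shows each of these is strong on its own; since only top blocks can have $z$- or $d$-parents in $\Tti_5$ and $\Tti_7$, we have $|\II_T^2\cup\II_T^4|\le 2|T|$, so one of the two has size at most $|T|$. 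Symmetrically, one obtains a strong set of size at most $|B|$. Since $|T|+|B|+|D|=n$, the smallest of the three strong sets has size at most $n/3$. This pigeonhole over $T$, $B$, $D$ is the key idea your proposal is missing, and the genuinely hard content of the proof is the strongness of the $T$-based (and $B$-based) sets, which your approach never has to confront because it stakes everything on the double blocks alone.
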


This means that knowing the eight edge-labeled trees and $n/3$ well chosen identifications is enough to encode the floor plan. The cost of recording the trees is exponential, while the cost of writing down $n/3$ identifications is $n^{n/3} c^n$, hence this proves that $|\TT_n| \leq n^{n/3} c^n$ and thus completes the proof of Theorem~\ref{2layer_theorem}.

\begin{proof}[Proof of Proposition~\ref{prop:smallstrong}]

We begin by showing that identifying the double blocks allows one to reconstruct all identifications.  For any pair $i,j$ let us define
\[ \II_D^{i,j} = \{(x_i,x_j) : x \in D\} \mbox{ and } \II_D = \cup_{i,j} \II_D^{i,j}. \]

\begin{lemma} For any $i \in \{1,2,3,4\}$ and $j \in \{5,6,7,8\}$, the identification set  $\II_D^{i,j}$ is strong. \label{lem:complete}
\end{lemma}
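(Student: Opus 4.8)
The plan is to show that the single cross-layer matching $\II_D^{i,j}$ already pins down every double block in all eight trees, and that this in turn lets us reconstruct the whole two-layer floor plan up to topological equivalence, from which the full identification $\FI$ can be read off. The first step is to upgrade $\II_D^{i,j}$ to the complete double-block identification $\II_D$. Given a double block $x$, the pair $(x_i,x_j)\in\II_D^{i,j}$ records its vertex in the top tree $\Tti_i$ and in the bottom tree $\Tti_j$. Since $x\in T\cup D$, Proposition~\ref{prop:free} recovers $x_{i'}$ for every $i'\in\{1,2,3,4\}$ from $x_i$; since $x\in B\cup D$, the same proposition recovers $x_{j'}$ for every $j'\in\{5,6,7,8\}$ from $x_j$. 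Thus the vertex representing $x$ in all eight trees is determined, and ranging over $x\in D$ produces all of $\II_D$. It therefore suffices to prove that $\II_D$ is strong.

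Next I would reconstruct the two planar floor plans and glue them. Deleting the $z$- and $d$-edges from $\Tti_1$ and $\Tti_3$ yields two opposite corner trees of the top 2D floor plan (on blocks $T\cup D$); by the 2D reconstruction of~\cite{dual trees} invoked in Proposition~\ref{prop:free}, this determines the top plan up to topological equivalence together with a canonical labeling of its $T\cup D$ blocks across the top trees. The same construction on $\Tti_5,\Tti_7$ reconstructs the bottom plan (on blocks $B\cup D$). The double blocks occur in both plans, and $\II_D$ records precisely how the double regions of the top plan match those of the bottom plan. Gluing the two planar plans along this matching, by stacking them so that each matched pair of double regions becomes a single block spanning both layers, reconstructs the full two-layer floor plan $F$ up to topological equivalence; realizability of the gluing is automatic because the data originated from an actual floor plan.

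Finally I would read off $\FI$ from the reconstructed $F$. The only identifications not yet accounted for are the cross-layer ones, namely the appearances $x_5,\dots,x_8$ of a top block and $x_1,\dots,x_4$ of a bottom block. The key point is that the already-known partial labeling extends uniquely: in a corner tree every vertex has at most one child of each of the four edge labels, so any edge-label-preserving automorphism that fixes a vertex also fixes all of its descendants. Since the $T\cup D$ vertices of each top tree are already canonically labeled, an induction along parent edges—each remaining (bottom) vertex being the unique edge-labeled child of its already-fixed parent—forces the labeling of the entire top tree, and symmetrically for each bottom tree. Matching these forced labelings against the concrete corner trees computed from the reconstructed $F$ then yields every vertex-to-vertex identification, i.e.\ all of $\FI$.

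I expect the main obstacle to be the gluing step together with the uniqueness it must support: one has to verify that the topological type of the two-layer plan really is determined by the two planar types plus the double-block matching, and, relatedly, that the resulting cross-layer identifications are genuinely canonical rather than merely mutually consistent. The observation that each vertex of a corner tree has at most one child per edge label is what I would lean on to close this gap, since it is exactly what makes the extension of a partial labeling to a full labeling unique.
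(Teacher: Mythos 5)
Your opening and closing steps are sound: promoting $\II_D^{i,j}$ to the full double-block identification $\II_D$ is exactly Remark~\ref{rem:double}, and the observation that a rooted tree with at most one child per edge label admits at most one edge-label-preserving rooted isomorphism onto another such tree does legitimately convert a reconstructed floor plan into the full identification $\FI$. The genuine gap is the middle step, and you have in effect flagged it yourself: the assertion that the top 2D type, the bottom 2D type, and the matching of double blocks determine the two-layer plan up to topological equivalence is not a bookkeeping step --- it \emph{is} the lemma. ``Strong'' means precisely that the unlabeled trees plus $\II_D$ pin down everything else, and all of the cross-layer rigidity of the problem is concentrated in your gluing claim. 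Saying that ``realizability is automatic because the data came from an actual floor plan'' gives existence of a compatible two-layer plan, never uniqueness of its equivalence class. Worse, the tool you propose for closing the gap --- one child per edge label --- addresses rigidity of tree isomorphisms (your final step); it can say nothing about whether two different gluings have isomorphic dissection graphs, which is where the problem actually lives.

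To see that the gluing claim has real content (I believe it is true, but it needs a proof of its own): the dissection planes of the two-layer plan orthogonal to the $x$- and $y$-axes arise from the dissection lines of the two planar plans by merging, and one must check that every cross-layer attachment passes through a face of a double block (a face of a block occupying only the top layer meets a face of a block occupying only the bottom layer in zero area), so that this merging is readable from the two 2D dissection graphs together with the matching. More delicately, the horizontal plane between the layers need not be a single dissection plane: chains of double blocks can sever it into several pieces, and the $z$-direction dissection graph then records which top blocks sit over, and which bottom blocks sit under, each piece. That this cross-layer incidence is an invariant of the triple (top type, bottom type, matching) is exactly what your proposal assumes; establishing it requires an argument (for instance, that adjacency components of blocks survive the degenerate ``plus''-transitions that deformations are allowed to pass through), and it is precisely the information the paper extracts instead by tracing identification loops through double blocks as in Figure~\ref{fig:loop}. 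Note the contrast in routes: the paper never reconstructs the floor plan at all; it works entirely inside the trees, showing that the components of $\Tti_5$ with its $z$- and $d$-edges deleted recur isomorphically in $\Tti_1$, and pinning down the roots of those components using $\II_D$ and the local geometry. Your architecture is genuinely different and could in principle be completed, but as written the proof of the lemma's actual content is missing.
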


\begin{remark} \label{rem:double}
Note that by Proposition \ref{prop:free} knowing $\II_D^{i,j}$ is equivalent to knowing $\II_D^{i',j'}$ for any for pair of $i',j'$ where $i' \in \{1,2,3,4\}$ and $j' \in \{5,6,7,8\}$, and also equivalent to knowing $\II_D$. Hence we can assume that the whole $\II_D$ is given.
\end{remark}

\begin{figure}[ht]
\centering
\begin{center}\def\svgwidth{2.5in}
$\begin{array}{cc}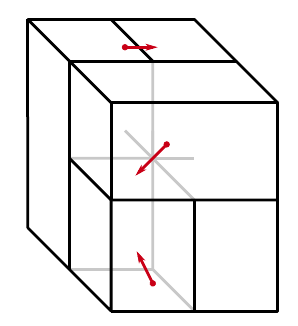&\def\svgwidth{2.5in}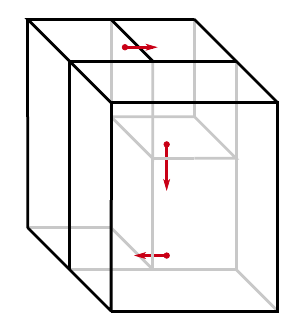
\end{array}$
\caption{Identification loops\label{fig:loop}}
\end{center}
\end{figure}

\begin{proof}[Proof of Lemma \ref{lem:complete}]
Our job is to find $\FI$ using $\II_D^{i,j}$. By Remark~\ref{rem:double} and by the symmetric role of the top and bottom layers, it is sufficient to show that we can find the part of $\FI$ corresponding to blocks in the top layer. By Proposition~\ref{prop:free} it is sufficient to find for each block $x \in T$ and each $j \in \{5,6,7,8\}$ the identification of $x_j$ with $x_i$ for some $i \in \{1,2,3,4\}$. Again, by symmetry, we can concentrate on $j = 5$, the other cases follow similarly.

Let's take a look at $\Tti_5$. Consider the rooted forest $\Tti_5'$ obtained by removing all $z$- and $d$-edges from it.  As we have seen in the proof of Proposition~\ref{prop:free}, the component of $\Tti_5'$ containing the original root corresponds to the blocks of $B \cup D$. All the other components consist of blocks from $T$. Now the crucial (but very easy) observations is that if a top block has an $x$- or $y$-type parent in $\Tti_5$, then it has to have the same type of parent in $\Tti_1$, and in fact the parent block has to be the same. Hence all these components of $\Tti_5'$ are isomorphically there in $\Tti_1$. If we can manage to identify the root vertex of each of these components of $\Tti_5'$ with the appropriate vertex in $\Tti_1$, then  the rest of the components are automatically identified by the local isomorphisms. Hence to finish the proof of the lemma, all we need is the following: given a vertex $u_5 \in \Tti_5$ whose parent is of type $z$ or $d$, find the corresponding vertex $u_1 \in \Tti_1$.

So let $u_5$ be a root vertex of a top component in $\Tti_5'$.  Then $u_5$ in $\Tti_5$ has either a $d$- or $z$-type parent. Let us denote this  parent by $v_5 \in \Tti_5$ corresponding to some block $v \in F$.  First assume it is a diagonal edge, as on the left side of Figure \ref{fig:loop}.  Since $v_5$ is in $B$, by Proposition~\ref{prop:free} we can find the corresponding $v_7 \in \Tti_7$. The parent block of this $y_7$ in $\Tti_7$ has to be necessarily a double block because of how the shape of a diagonal corner looks like. Let us denote this block by $w$. The edge of $y_7 w_7$ in $\Tti_7$ can only be an $x$-edge or an $y$-edge.  Without loss of generality we may assume the second case. (This is shown on the left side of Figure~\ref{fig:loop}.) Then the local geometry tells us that in $\Tti_2$ the parent of $w$ is $u$, the block we started from. Now since $w \in D$ and we know $w_7$, we can use the identification set $\II_D$ according to Remark~\ref{rem:double} to find its counterpart $w_2 \in \Tti_2$. The parent of $w_2$ has to be $u_2$ in $\Tti_2$. In summary, what happened was that we took a vertex $u_5 \in \Tti_5$, looked at a sequence of vertices in various trees, and finally arrived at a vertex of $\Tti_2$ that represents the same block. The sequence was
\[ u_5 \xrightarrow{d} v_5 \xrightarrow{B} v_7 \xrightarrow{y} w_7 \xrightarrow{\II_D} w_2 \xrightarrow{x} u_2 \]
where the first, third and last steps involved looking at the parent in the current tree, the second step used Proposition~\ref{prop:free} to find a bottom block in another tree, finally the fourth step used $\II_D$ to find a double block in another tree. (If the local configuration is the mirror image of what is seen in Figure~\ref{fig:loop}, then $w_5$ would be an $x$-type parent of $v_5$, then $u$ would be an $y$-type parent of $w$ in $\Tti_4$, hence by a similar argument we could find $u_4 \in \Tti_4$.)

Now assume that $v_5$ is a $z$-type parent of $u_5$,  as shown on the right side of Figure~\ref{fig:loop}. In this case we need to look at the parent of $v_5$ in $\Tti_5$ still. It is either an $x$- or a $y$-type parent. Again, without loss of generality, we can assume the second case, as shown in Figure~\ref{fig:loop}. Denoting by $w_5$ the parent of $v_5$, the local geometry says that $w$, the block represented by this vertex, has to be a double block. Further, in $\Tti_2$ the parent of $w$ has to be $u$. We have found $w_5 \in \Tti_5$. Since it is a double block, using $\II_D$ according to Remark~\ref{rem:double} we can find $w_2 \in \Tti_2$, and looking at its parent we find $u_2 \in \Tti_2$. The sequence now was
\[ u_5 \xrightarrow{z} v_5 \xrightarrow{y} w_5 \xrightarrow{\II_D} w_2 \xrightarrow{x} u_2. \]
(Again, in the mirrored case, if $w_5$ is an $x$-type parent of $v_5$, then $u_4$ would be the $y$-type parent of $w_4$ in $\Tti_4$, so we could find $u_4$ by the same argument.)

Thus in both cases, starting from a vertex in $\Tti_5$, we have found its counterpart in $\Tti_2$  (or in the mirrored cases in $\Tti_4$). But by Proposition~\ref{prop:free}, for any block in the top layer, identification between $\Tti_1$ and $\Tti_2$ (or $\Tti_4$) is free. So we can finally find $u_1 \in \Tti_1$. This completes the proof of the lemma.
\end{proof}

Next, we construct two more strong identification sets. To define these, we need a final bit of notation.
Consider a diagonal edge $u_5 v_5$ in $\Tti_5$. As we have seen in the proof of Lemma~\ref{lem:complete} and also in Figure~\ref{fig:loop}, the parent of the block $v$ in $\Tti_7$ is a double block $w$. Depending on the type of the $v_7 w_7$ edge, there are two cases. If the $v_7 w_7$ edge is of type $x$, then $u$ is the $y$-parent of $w$ in $Tti_4$. If the $v_7 w_7$ edge is of type $y$, then $u$ is the $x$-parent of $w$ in $\Tti_2$. Let us split the diagonal edges of $\Tti_5$ into two classes, $d_4$ and $d_2$, according to these two cases. (The subscript denotes the tree in which $u$ is the parent of $w$.)

We can similarly split the $z$-edges in $\Tti_5$ into two classes. In the $z_2$ class the child of the $z$-edge in $\Tti_5$ is the $x$-parent of a double block in $\Tti_2$ while in the $z_4$ class the child of the $z$-edge is the $y$-parent of a double block in $\Tti_4$. The various classes of edges in $\Tti_5$ are shown in Figure~\ref{fig:type}.

The same classification can be done for $\Tti_7$ instead. If $u_7 v_7$ is a diagonal (or $z$-) edge in $\Tti_7$ then the parent of $v$ in $\Tti_5$ (or in $\Tti_7$) is a double block $w$, and $u$ is either the $x$-parent of $w$ in $\Tti_4$ or the $y$-parent in $\Tti_2$. In the first case we call the edge of type $d_4$ (or $z_4$ respectively) and in the second case type $d_2$ (or $z_2$ respectively). The local arrangements corresponding to these situations would be mirror reflections of the ones in Figure~\ref{fig:type} through the vertical plane adjacent to corners 2,4,6,8.

\begin{figure}[ht]
\centering
\begin{center}\def\svgwidth{1.5in}
$\begin{array}{cccc}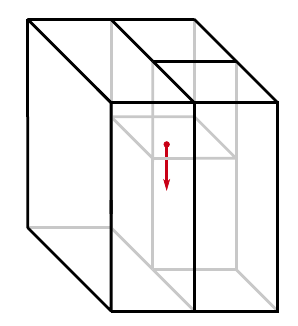&\def\svgwidth{1.5in}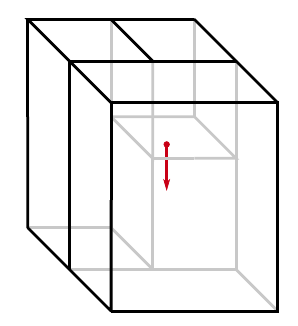&\def\svgwidth{1.5in}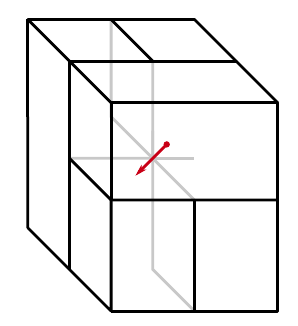&\def\svgwidth{1.5in}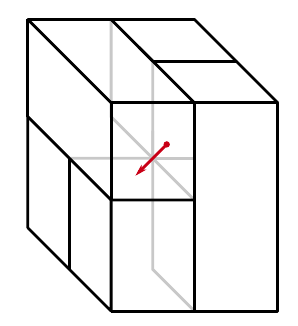\\
\mbox{$ZS$-type}&\mbox{$ZF$-type}&\mbox{$DS$-type}&\mbox{$DF$-type}
\end{array}$
\\
\caption{Refined edge types in $\Tti_5$\label{fig:type}}
\end{center}
\end{figure}

Now we are ready to define the two identification sets.
\[ \II_T^4 = \{ (u_5,u_4) : u_5 \in \Tti_5 \mbox{ is a $d_4$ or $z_4$ child} \} \cup \{ (u_7,u_4) : u_7 \in \Tti_7 \mbox{  is a $d_4$ or $z_4$ child} \} , \]
\[ \II_T^2 = \{ (u_5,u_2) : u_5 \in \Tti_5 \mbox{ is a $d_2$ or $z_2$ child} \} \cup \{ (u_7,u_2) : u_7 \in \Tti_7 \mbox{  is a $d_2$ or $z_2$ child} \} .\]

\begin{lemma}\label{lem:top}
The identification sets $\II_T^2$ and $\II_T^4$ are both strong.
\end{lemma}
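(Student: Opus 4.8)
The plan is to show that $\II_T^2$ determines the double-block identification $\II_D$; since $\II_D$ is strong by Lemma~\ref{lem:complete} (and Remark~\ref{rem:double}), this proves $\II_T^2$ strong. For $\II_T^4$ I would invoke the reflection interchanging the $x$- and $y$-axes: it fixes corners $1,3,5,7$ and swaps $2\leftrightarrow 4$, $6\leftrightarrow 8$, so it preserves the trees $\Tti_5,\Tti_7$ (up to relabelling their $x$- and $y$-edges) while exchanging $\Tti_2\leftrightarrow\Tti_4$, and therefore carries the $d_2/z_2$ classification to the $d_4/z_4$ classification. Thus it turns any two-layer plan $F$ into a plan $F'$ and identifies $\II_T^2$ of $F$ with $\II_T^4$ of $F'$; as strongness is preserved by this relabelling, it suffices to recover $\II_D$ from $\II_T^2$.

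First I would read off a partial copy of $\II_D$ directly, by running the loop of Lemma~\ref{lem:complete} in reverse. For a recorded $d_2$- or $z_2$-child $u_5$ with partner $u_2$, one parent step (together with Proposition~\ref{prop:free}) produces the loop's double block $w$ and its bottom vertex, while on the top side $w_2$ is the unique $x$-child of $u_2$ in $\Tti_2$; pairing these gives the cross-identification of $w$, i.e.\ an element of $\II_D$. Doing the same for the recorded children of $\Tti_7$, every double block that carries a top-only root on its corner-$2$ side becomes identified.

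Next I would propagate. The key point is that two double blocks sharing a face have identical footprints in both layers, so their parent/child relation is the same in $\Tti_2$ as in $\Tti_6$ (hence, by Proposition~\ref{prop:free}, in all of $\Tti_5,\dots,\Tti_8$). Consequently, if $w'$ is a double face-neighbour of an already identified double block $w$, then $w'_2$ is the corresponding child (or parent) of $w_2$ in $\Tti_2$ and its bottom vertex is the matching parent (or child) of that of $w$ in $\Tti_6$, so the identification slides from $w$ to $w'$. Iterating, every double block whose ray toward the corner-$2$ walls meets a top-only block gets identified. The survivors lie in a maximal all-double region abutting the host walls on the corner-$2$ side; there the top and bottom plans coincide, the block occupying the host corner is either double—in which case it is simultaneously the root of $\Tti_2$ and of $\Tti_6$ and so identified for free—or top-only, in which case it is itself a root already treated above, and propagating inward from it fills the region.

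Assembling the three steps recovers all of $\II_D$, whereupon Lemma~\ref{lem:complete} returns $\FI$ and $\II_T^2$ is strong. I expect the genuine difficulty to lie entirely in the second and third steps: one must verify that the propagation reaches every double block and never stalls. The delicate cases are partial-face contacts, where ``the neighbour on a given side'' is not a single block and one must use the precise corner-sharing parent of Definition~\ref{corner_tree} rather than naive adjacency (and check that the loop of Lemma~\ref{lem:complete} still closes), and the corners where an all-double region abuts top-only roots, where one must confirm that the host-corner block (or the root sitting there) furnishes a valid starting identification. This is a finite but careful case analysis over the local generic configurations of Figure~\ref{fig:type}, rather than any single additional idea.
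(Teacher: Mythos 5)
Your skeleton matches the paper's proof: reduce $\II_T^4$ to $\II_T^2$ by the diagonal reflection, reduce strongness of $\II_T^2$ to recovering the double-block identifications between $\Tti_2$ and $\Tti_6$ (after which Lemma~\ref{lem:complete} and Remark~\ref{rem:double} finish), obtain seed identifications by running the loops attached to the recorded $z_2/d_2$-children, and spread them to the remaining double blocks using the two-layer structure; your root-of-$\Tti_2$ observation also appears implicitly in the paper. The genuine gap is in the propagation and coverage step, which is the heart of the lemma. Your claim that for a double face-neighbour $w'$ of an identified double block $w$ ``$w'_2$ is the corresponding child (or parent) of $w_2$ in $\Tti_2$'' is false: face-adjacency does not imply adjacency in a corner tree. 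If two double blocks share a face orthogonal to the $x$-axis but are offset in $y$, the corner of the farther block nearest corner $2$ generically lies in the interior of an edge of the nearer block, so its $\Tti_2$-parent is a third block altogether. Consequently the geometric coverage argument you build on it (rays toward the corner-$2$ walls, maximal all-double regions, ``propagating inward fills the region'') does not go through as written, and you have explicitly deferred the verification that would replace it.

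The repair, which is exactly how the paper argues, is to induct along $\Tti_2$ rather than along geometric adjacency. The one fact you are missing is that in a two-layer plan the $\Tti_2$-parent of a double block is never a bottom-only block: the corner in question lies on the top face of the host box, which a bottom-only block does not reach. Hence the parent chain in $\Tti_2$ starting from any double block passes through double blocks only, until it reaches either (a) a top-only block, in which case the last double block of the chain is one of your seeds --- this is where one needs the Figure~\ref{fig:type} case analysis you postponed, showing that a top-only block which is the $x$- (resp.\ $y$-) parent of a double block in $\Tti_2$ necessarily has a $z_2$- or $d_2$-type parent in $\Tti_5$ (resp.\ $\Tti_7$), so that the corresponding pair really is recorded in $\II_T^2$ --- or (b) the root of $\Tti_2$, which is then a double block identified for free with the root of $\Tti_6$. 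Identifications then propagate back down the chain by your correct mirroring observation: an $x$- or $y$-labelled parent edge between two double blocks in $\Tti_2$ joins the same two blocks, with the same label, in $\Tti_6$. Replacing ``face-neighbour'' by ``$\Tti_2$-neighbour'' and the region argument by this parent-chain recursion turns your outline into the paper's proof.
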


\begin{proof}
Since the definitions of the two sets are symmetric, it suffices to show that $\II_T^2$ is strong. In the light of Lemma~\ref{lem:complete} if we can show that all the vertices representing double blocks in $\Tti_2$ can be identified with their counterparts in $\Tti_6$ then we are done.

We do this identification recursively: if a parent of a double block $w_2 \in \Tti_2$ is an other double block $u_2 \in \Tti_2$, then, since the whole floor plan has only two layers, the parent of $w_6 \in \Tti_6$ has to be the $u_6$, and the two parents have to have the same type. Hence if we could identify $u_2$ with $u_6$, and we find that the parent of $w_2$ is $w_6$ of type $x$, then by looking at the $x$-child of $u_6$ we can find $u_2$. (Similarly for $y$-type parents.) Thus we only have to worry about double blocks $w$ for which the parent of $w_2 \in \Tti_2$ is a single block (hence of type $T$) $u_2 \in \Tti_2$.

Suppose first that $u_2$ is an $x$-parent of $w_2$. Then the local arrangement has to look like as in Figure~\ref{fig:type}. Thus the parent $v$ of the top block $u$ in $\Tti_5$ has to be a $z$- or $d$-parent, furthermore it can only be of type $z_2$ or $d_2$. This means that $(u_5,u_2) \in \II_T^2$. We can use this to trace an identification loop as before:
\[ w_2 \xrightarrow{x} u_2 \xrightarrow{\II_T^2} u_5 \xrightarrow{d_2} v_5 \xrightarrow{B} v_7 \xrightarrow{y} w_7 \xrightarrow{B} w_6, \] or
\[ w_2 \xrightarrow{x} u_2 \xrightarrow{\II_T^2} u_5 \xrightarrow{z_2} v_5  \xrightarrow{x} w_5 \xrightarrow{B} w_6, \]
depending on type of the $u_5 v_5$ edge. In other words, starting from $w_2$, and looking at parents or jumping between trees using the known identifications, we find its counterpart $w_6 \in \Tti_6$.

Similarly, if $u_2$ is an $y$-parent of $w_2$. Then the local arrangement has to look like the mirror image of those in Figure~\ref{fig:type}. Thus the parent $v$ of the top block $u$ in $\Tti_7$ has to be a $z$- or $d$-parent, furthermore it can only be of type $z_2$ or $d_2$. The argument is then the same as above. This completes the proof of the lemma.
\end{proof}

Now it is very easy to finish proving the proposition. Clearly, $|\II_D^{i,j}| = |D|$. On the other hand, observe, that $\II_T^2 \cup \II_T^4$ has exactly one identification written down for each vertex having a $z$- or a $d$-parent in $\Tti_5$, and one identification for each vertex having a $z$- or $d$-parent in $\Tti_7$. But only top blocks can have $z$- or $d$-parents, hence $ | \II_T^2 \cup \II_T^4 | \leq 2|T|$, and so one of them has to be at most of size $|T|$. Finally one could repeat the whole argument of Lemma~\ref{lem:top} with identifications sets $\II_B^6$ and $\II_B^8$ defined entirely analogously, reversing top and bottom everywhere. Then one of these two strong identification sets have to be of size at most $|B|$.

So we have found three identification sets of size at most $|D|, |T|$, and $|B|$ respectively. Since $|D| + |T| + |B| = n$, one of these have to be of size at most $n/3$, proving the proposition, completing the proof of Theorem \ref{2layer_theorem}.
\end{proof}

\end{document}